\tikzset{
    >=stealth',
    punkt/.style={
           circle,
           draw=black,
           text centered},
    punkt2/.style={
           rectangle,
           rounded corners,
           draw=black,
           text centered},
    punkt3/.style={
           rectangle,
           rounded corners,
           draw=black, thick, dashed,
           text centered},
    pil/.style={
           ->,
           shorten <=2pt,
           shorten >=2pt,}
}
\theoremstyle{plain}
\newtheorem{theorem}{Theorem}[section]
\newtheorem{lemma}[theorem]{Lemma}
\newtheorem{proposition}[theorem]{Proposition}
\newtheorem{corollary}[theorem]{Corollary}
\theoremstyle{definition}
\newtheorem{definition}[theorem]{Definition}
\newtheorem{example}[theorem]{Example}
\newtheorem{problem}[theorem]{Problem}
\theoremstyle{remark}
\newtheorem{remark}[theorem]{Remark}
\numberwithin{equation}{section}
\newcommand{\N}{\mathbb N}
\newcommand{\R}{\mathbb R}
\newcommand{\mc }{\mathcal}
\author{Marek Balcerzak}
\address{Institute of Mathematics, Lodz University of Technology, al. Politechniki 8, 93-590
\L\'od\'z, Poland}
\email{marek.balcerzak@p.lodz.pl}
\author{\v{L}ubica Hol\'{a}}
\address{Academiy of Sciences, Institute of Mathematics, \v{S}tef\'anikova 49, 814 73 Bratislava, Slovakia}
\email{hola@mat.savba.sk}
\author{Olena Karlova}
\address{Department of Exact and Natural Sciences
		Jan Kochanowski University in Kielce, ul. Uniwersytecka 7, 
    	25-406 Kielce,
        Poland\newline     
    Yurii Fedkovych Chernivtsi National University, 
		 str. Kotsyubynskoho 2,
	     58000 Chernivtsi
         Ukraine}         
\email{okarlova@ujk.edu.pl}
\author{Piotr Szuca}
\address{Institute of Mathematics, Faculty of Mathematics, Physics and Informatics,
University of Gda\'{n}sk, 80-308 Gda\'{n}sk, Poland}
\email{piotr.szuca@ug.edu.pl}
\title{Borel 1 type mappings and the respective equi-families}
\subjclass[2020]{54H05, 26A21}
\keywords{Borel~1 functions, Lebesgue property, weakly separated functions, equi-Baire~1 families, topology of pointwise convergence, functions in two variables of Borel class $\alpha$}
\date{}
\begin{document}

\begin{abstract}
We investigate classes of functions from a topological space to a metric space
that are related to those of Borel class~1.
Following the idea defining an equi-Baire~1 family (due to Lecomte) we define the respective equi-families of functions from the considered classes. We observe that studying of equi-families can be reduced to the exploration of a single orbit map with values in a product space.
We consider the closure of equi-families  with respect to the topology of pointwise convergence. 
Finally, we investigate functions $f\colon X\times Y\to Z$, for metric spaces $X,Y,Z$, with sections that are equi-continuous, equi-Baire~1 or have equi-generalized Lebesgue property with respect to measurable sets of class $\alpha$. In particular, we generalize a result of Grande.
\end{abstract}
\maketitle


\section{Introduction}

Let $f$ be a function between two metric spaces $X$ and $Y$. We say that $f$ is \emph{Baire~1} if $f$ is the limit of a pointwise convergent sequence of continuous functions $f_n\colon X\to Y$, $n\in\N$. We say that $f$ is \emph{Borel~1} if it is $F_\sigma$-measurable, that is, its preimage of every open set is $F_\sigma$. Note that every Baire~1 function is Borel~1, but the converse does not hold in general; see \cite[24B]{Kech}.
However, if $Y=\R$ or $Y$ is a Banach space, then the Baire~1 functions and the Borel~1 functions coincide; see \cite{Kech} and also \cite{Fo} and \cite[Remark 5.6]{Sp} for 
more general results. Note that several authors (see, e.g. \cite{Kech}, \cite{Le}, \cite{LTZ}) use the name Baire~1 for $F_\sigma$-measurable functions which can sometimes lead to confusion. 

Many nice characterizations of Baire~1 (or Borel~1) functions are known. Let us mention some of them. The classical theorem of Baire states that, if $X$ is complete and $Y$ is separable, then $f$ is Borel~1 if and only if
$f\upharpoonright P$ has a point of continuity for every non-empty closed set $P\subseteq X$ (see \cite{Ku}); for some generalizations, see \cite{Kou}). This last condition is called the 
\emph{Point of Continuity Property} (PCP). 

Lee, Tang, and Zhao \cite{LTZ} obtained an interesting 
$\varepsilon$-$\delta$ characterization of Borel~1 functions between two Polish spaces.
The respective condition characterizing Borel~1 functions was called \emph{LTZ-property} in \cite{BHH}.
The result of \cite{LTZ} was later generalized in \cite{FC} to the case when $X$ and $Y$ are separable spaces. The authors of \cite {FC} used an old Lebesgue theorem \cite{Leb} in their proof. Although the Lebesgue result was concerned with the case $X=Y=\R$, it remains true if $X$ is arbitrary and $Y$ is separable \cite[Thm 2.1]{BHH}. The respective condition stated in Lebesgue's
result was referred to in \cite{BHH} as the \emph{Lebesgue property}.

Note that Baire 1 real-valued functions and their regular subclasses have important applications in classical theory of real functions (see \cite{Br}) and functional analysis \cite {HOR}. Independently, some other interesting classes of maps related to Baire 1 functions have been studied, also in a general setting where $X$ is a topological space. Namely, Koumoullis \cite{Kou} considered fragmentable maps and functions with PCP. Bouziad \cite{Bou} investigated weakly separated maps, which coincide with functions with LTZ-property if $X$ is a metric space. He also used cliquish and fragmentable maps, as well as those having PCP. 

In Section~\ref{sec:properties}
we collect the above-mentioned classes of maps. We introduce a new notion --- the \emph{generalized
Lebesgue property}. We establish inclusions between these families. First, this is done when $X$ is a topological space. Some of the inclusions can be reversed under additional assumptions in $X$, while others  are appropriate, as shown by examples. Second, we consider the case where both spaces $X$ and $Y$ are metric spaces.

Our work is, in some sense, a continuation of \cite{BKS} and \cite{BHH}, which were inspired by the intriguing notion of \emph{equi-Baire~1 families} of functions, proposed independently by Lecomte \cite{Le} and Alikhani-Koopeai \cite{A1}. This notion plays a role in dynamical systems similar to that of equi-continuous families (see \cite{A2}). We observed that other types of equi-families of functions can be of interest
(see \cite{GM}, \cite{Gr}  and \cite{BHH}, where equi-PCP property and equi-Lebesgue property were considered).
We collect such families in Section~\ref{sec:families} and present a simple tool which shows that the study of equi-families can be reduced to the analysis of a single orbit map with values in the product space.
Finally, we solve two problems from \cite{BKS} concerning functions of two variables with sections being equi-continuous or equi-Borel measurable of class $\alpha$, and we obtain a generalization of the result of Grande \cite{Gr}.


\section{Properties related to being a Borel 1 map}	
\label{sec:properties}

We consider several collections of functions from a topological space $X$ into a metric space $Y$ that are related to Borel~1 functions and which turned out to be important when studying families of equi-Baire~1 functions.
We establish inclusions between them and show examples when inclusions are proper. If $X$ is a metric space, some of these families are equal under or without additional assumptions on $X$.
	\subsection{Relations between various families of functions}
	
	\begin{definition}\label{OneF}
		Let $X$ be a topological space, $(Y,d)$ be a metric space and $f\colon X\to Y$ be a function. 
		We say that $f$
		\begin{itemize}
			\item is {\it weakly separated}, if 
			\begin{itemize}
				\item[{\rm (WS)}] for every $\varepsilon>0$ there exists a neighborhood assignment $\{V_x\}_{x\in X}$ such that for all $x,y\in X$
				\begin{equation*} 
					(x,y)\in V_{y}\times V_{x} \,\,\,\Longrightarrow\,\,\, d(f(x),f(y))<\varepsilon;
				\end{equation*} 
				(A family $\{V_x\}_{x\in X}$ of open sets in $X$  is called a \emph{neighborhood assignment} whenever $x\in V_x$ for each $x\in X$.) 
			\end{itemize} 
			
			\item is {\it fragmentable}, if 
			\begin{itemize}
				\item[{\rm (Frag)}] for every  $\varepsilon>0$ and every closed set $\emptyset\ne F\subseteq X$ there exists an open set $U$ such that $U\cap F\neq\emptyset$ and ${\rm diam} f(U\cap F)<\varepsilon$;
			\end{itemize}
			
		    \item is {\it cliquish}, if  
		    \begin{itemize}
		    	\item[{\rm (Cliq)}] for every  $\varepsilon>0$ and every  open set $\emptyset\ne U\subseteq X$ there exists an open set $\emptyset\ne O\subseteq U$ such that  ${\rm diam} f(O)<\varepsilon$;
		    \end{itemize}
            (Clearly, $f$ is fragmentable if and only if $f\restriction_F$ is cliquish for each nonempty closed set $F$.)
			
			\item has the \emph{Lebesgue property}, if 
			\begin{itemize}
				\item[{\rm (LP)}] for every $\varepsilon >0$ there is a cover 
				$(X_n)_{n\in\mathbb N}$ of $X$ consisting of closed sets such that ${\rm diam}f(X_n)\le\varepsilon$ for all $n\in\mathbb N$;
			\end{itemize}
						
			\item has the \emph{generalized Lebesgue property}, if
			\begin{itemize}
				\item[{\rm (GLP)}] for every $\varepsilon >0$ there is a $\sigma$-discrete family $\mathcal A_\varepsilon$ consisting of closed subsets of $X$ such that $X=\bigcup \mathcal A_\varepsilon$ and ${\rm diam}f(A)\le\varepsilon$ for all $A\in\mathcal A_\varepsilon$.
			\end{itemize}
			(Recall that a family $\mathcal A$ of subsets of topological space $X$ is {\it discrete}, if every point $x\in X$ has a neighborhood $U$ which intersects at most one set from the family $\mathcal A$. It is worth noting that the union of an arbitrary discrete family of closed sets is also a closed set.
			A family $\mathcal A$ is called {\it $\sigma$-discrete}, if it is a countable union of discrete families.)

			\item has the {\it point of continuity property (PCP)}, if 
			\begin{itemize}
				\item [{\rm (PCP)}]	for every  closed set $\emptyset\ne F\subseteq X$ the restriction $f\restriction_F$ has a point of continuity;
			\end{itemize}

            \item is {\it pointwise discontinuous (PWD)}, if 
			\begin{itemize}
				\item [{\rm (PWD)}]	for every open set $\emptyset\ne U\subseteq X$ the restriction $f\restriction_U$ has a point of continuity;
			\end{itemize}
            (In other words, $f$ is pointwise discontinuous whenever
            the set of continuity points of $f$ is dense in its domain.)
			
			\item is \emph{Borel 1}, if 
			\begin{itemize}
				\item[{\rm (Borel 1)}] the preimage $f^{-1}(V)$ of any open set $V\subseteq Y$ is an $F_\sigma$-set in $X$.
			\end{itemize}							
		\end{itemize}
	\end{definition}

	 \begin{theorem}\label{thm:topX}
	 	  For any topological space $X$ and a metric space $Y$ the following relations hold:
	 		\tikzset{every picture/.style={line width=0.75pt}} 
		
		\begin{center}	

		\begin{tikzpicture}[x=0.75pt,y=0.75pt,yscale=-1,xscale=1,
	node distance=1cm, 
	auto,
	shorter/.style={shorten <=1mm,shorten >=0.5mm},
	|*/.style={to path=(\tikztostart.south) -- (\tikztostart.south|-\tikztotarget.north)},
	*|/.style={to path=(\tikztostart.south-|\tikztotarget.north) -- (\tikztotarget.north)}
 ]
 \node[punkt2] (ws) {\rm WS};
 \node[punkt2,right=3cm of ws] (cliq) {\rm Cliq};
 \node[punkt2,right=1.5cm of cliq] (pwd) {\rm PWD};
 \node[punkt2,above left=2cm and 3.5cm of ws] (frag) {\rm Frag};
 \node[punkt2,below left=2cm and 3.5cm of ws] (glp) {\rm GLP};
 \node[punkt2,left=2.7cm of ws] (b1) {\rm Borel 1};
 \node[punkt2,above left=1cm and 1.5cm of glp] (lp) {\rm LP};
 \node[punkt2,below left=1cm and 1.5cm of frag] (pcp) {\rm PCP};
 \path (frag) edge[pil,->] (cliq);
 \path (frag) edge[pil,->] (ws);
 \path[transform canvas={shift={(-5pt,5pt)}}] (frag) edge[pil,->] node[anchor=south]{{\rm\small hB}} (pcp);
 \path (glp) edge[pil,->] node[anchor=north]{{\rm\small B}} (cliq);
 \path (glp) edge[pil,->] (ws);
 \path (lp) edge[pil,->] (glp);
 \path (pcp) edge[pil,->] (frag);
 \path (glp) edge[pil,->] (b1);
 \path (b1) edge[pil,->] node[anchor=north]{{\rm\small $Y$ sep}} (lp);  
 \path[transform canvas={shift={(0pt,-3pt)}}] (cliq) edge[pil,->] node[anchor=north]{{\rm\small B}} (pwd);
  \path[transform canvas={shift={(0pt,3pt)}}] (pwd) edge[pil,->] (cliq);
 \node[right=0.5cm of b1] (x) {\rm\small hB};
 \node[above right=-5pt and -12pt of glp] (glp2) {};
 \node[below right=-5pt and -12pt of frag] (frag2) {};
 \draw[->] plot [smooth, tension=1.1] coordinates {
    (glp2.north) (x.west) (frag2.south) };

\end{tikzpicture}

	\end{center}
Here (``hB'') ``B'' near an arrow means that $X$ is (hereditarily) Baire, ``$Y$ sep'' means that $Y$ is separable.
\end{theorem}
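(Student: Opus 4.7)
The theorem bundles many short implications, so my plan is a catalogue of short arguments rather than a single line of attack.

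\emph{Routine implications.} LP $\Rightarrow$ GLP is immediate since a countable family is $\sigma$-discrete. PCP $\Rightarrow$ Frag and PWD $\Rightarrow$ Cliq both follow by picking a continuity point of $f\restriction_F$ (respectively of $f\restriction_U$) and using its defining neighborhood as the required open set. Borel~1 $\Rightarrow$ LP with $Y$ separable follows by covering $Y$ with countably many open balls of radius $\varepsilon/2$ and splitting each $F_\sigma$ preimage into closed sets of diameter $\le\varepsilon$. Frag $\Rightarrow$ Cliq comes from applying (Frag) to $F=\overline U$: the resulting open $V$ meets $\overline U$, hence meets $U$, and $V\cap U\subseteq V\cap\overline U$ gives the required open subset of $U$.

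\emph{Baire-type implications.} For Cliq $\Rightarrow$ PWD with $X$ Baire, the set $G_n:=\bigcup\{O\text{ open}:\operatorname{diam} f(O)<1/n\}$ is open and, by (Cliq), dense, so $U\cap\bigcap_n G_n$ is dense in any open $U$ and consists of continuity points of $f\restriction_U$. For Frag $\Rightarrow$ PCP with $X$ hereditarily Baire, fragmentability restricts to any closed $F$ (which is Baire), and the previous step produces a continuity point of $f\restriction_F$. For GLP $\Rightarrow$ Cliq with $X$ Baire, write $\mathcal A_\varepsilon=\bigcup_n\mathcal A_n$ with each $\mathcal A_n$ discrete; each $\bigcup\mathcal A_n$ is closed, hence by Baire applied to $U$ some of them has nonempty interior $V\subseteq U$, and discreteness lets one pick $x\in V$ together with an open neighborhood meeting only one $A\in\mathcal A_n$, which (after intersecting with $V$) lies in $A$. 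Finally GLP $\Rightarrow$ Frag with $X$ hereditarily Baire follows by applying GLP $\Rightarrow$ Cliq to $f\restriction_F$ on the Baire space $F$, which inherits GLP.

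\emph{Main obstacles: the two WS implications and GLP $\Rightarrow$ Borel~1.} For Frag $\Rightarrow$ WS, fix $\varepsilon$ and build recursively a decreasing transfinite chain $F_0=X$, $F_{\alpha+1}=F_\alpha\setminus U_\alpha$, $F_\lambda=\bigcap_{\alpha<\lambda}F_\alpha$, where (Frag) furnishes an open $U_\alpha$ with $U_\alpha\cap F_\alpha\neq\emptyset$ and $\operatorname{diam} f(U_\alpha\cap F_\alpha)<\varepsilon$; the cells $U_\alpha\cap F_\alpha$ partition $X$, inducing a level function $\alpha(x)$. Set $V_x:=U_{\alpha(x)}\setminus F_{\alpha(x)+1}$, open by construction; if $x\in V_y$ and $y\in V_x$ then a monotonicity check using $F_\beta\subseteq F_{\alpha(y)+1}$ for $\beta>\alpha(y)$ forces $\alpha(x)=\alpha(y)$, placing $x,y\in U_{\alpha(x)}\cap F_{\alpha(x)}$ and giving $d(f(x),f(y))<\varepsilon$. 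The GLP $\Rightarrow$ WS argument runs in parallel with the level $n(x):=\min\{n:x\in\bigcup\mathcal A_n\}$ in place of $\alpha(x)$: pick $V_x$ open, disjoint from the closed set $\bigcup_{k<n(x)}\bigcup\mathcal A_k$ and, by discreteness of $\mathcal A_{n(x)}$, meeting only one $A_x\in\mathcal A_{n(x)}$; the same level-matching argument yields $x,y\in A_x$. For GLP $\Rightarrow$ Borel~1, given open $V\subseteq Y$, set
$E_{k,n}:=\bigcup\{A\in\mathcal A_{1/(2k),n}:f(A)\subseteq V\}$. Each $E_{k,n}$ is closed (a subfamily of a discrete family is discrete); given $x\in f^{-1}(V)$, pick $k$ with the open ball of radius $1/k$ around $f(x)$ inside $V$, and the diameter bound $1/(2k)$ forces the cell of $\mathcal A_{1/(2k)}$ containing $x$ into $f^{-1}(V)$. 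This gives $f^{-1}(V)=\bigcup_{k,n}E_{k,n}$, an $F_\sigma$-set.

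The delicate bookkeeping lies in the WS arguments: the avoidance sets $F_{\alpha(x)+1}$ and $\bigcup_{k<n(x)}\bigcup\mathcal A_k$ must be engineered precisely so that the asymmetric condition $x\in V_y$ and $y\in V_x$ forces equality of the level invariants; once this is established, the partition property (respectively discreteness) immediately delivers the diameter bound on $d(f(x),f(y))$.
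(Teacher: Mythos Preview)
Your catalogue is correct and matches the paper's approach implication by implication. The only difference is that where the paper dispatches Frag\,$\Rightarrow$\,WS, PCP\,$\Leftrightarrow$\,Frag (for hereditarily Baire $X$), Cliq\,$\Rightarrow$\,PWD (for Baire $X$), and Borel~1\,$\Rightarrow$\,LP (for separable $Y$) by citation to Bouziad, Koumoullis, \cite{MR686978}, and Lebesgue respectively, you supply direct arguments; your transfinite decomposition for Frag\,$\Rightarrow$\,WS is essentially Bouziad's proof, and your GLP\,$\Rightarrow$\,WS and GLP\,$\Rightarrow$\,Borel~1 arguments coincide with the paper's own.
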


 \begin{proof}	
    Implications {\bf (PCP) $\Rightarrow$ (Frag)} and {\bf (Frag) $\Rightarrow$ (PCP)} for hereditarily Baire spaces
    were proved by Koumoullis in~\cite[Theorem 2.3]{Kou}. 
	Further, Bouziad \cite{Bou} showed that {\bf (Frag) $\Rightarrow$ (WS)}. 
	Implication   {\bf (Frag) $\Rightarrow$ (Cliq)} follows easily from the definitions.
	Actually, Lebesgue proved that  {\bf (LP) $\Rightarrow$ (Borel 1)} (see \cite[p. 375]{Ku} and \cite{BHH}). 
  	Lebesgue proved also that {\bf (Borel 1) $\Rightarrow$ (LP)} (see also \cite[p. 375]{Ku} and \cite[Theorem 2.1]{BHH}) for a separable space $Y$.    
 	Implication {\bf (Cliq) $\Rightarrow$ (PWD)} was proved in~\cite{MR686978}.
    Since every countable family $\mathcal A$ is $\sigma$-discrete, we have {\bf (LP) $\Rightarrow$ (GLP)}. 

 	Let us show that {\bf (GLP) $\Rightarrow$ (WS)}. Assume $f\colon X\to Y$ has the generalized Lebesgue property and fix $\varepsilon>0$. 
 	There exists a closed cover $\mathcal{A}$ of $X$ 
 	such that ${\rm diam}f(A)\le\varepsilon$ for all $A\in\mathcal{A}$, and $\mathcal{A}=\bigcup_{n\in\mathbb{N}}\mathcal{A}_n$ where $\mathcal{A}_n$ is discrete for each $n\in\mathbb{N}$. 	
 	Note that for all $n\in\mathbb{N}$ and  
    $x\in\bigcup\mathcal{A}_n$ there exists a unique $F^n_x\in\mathcal{A}_n$ such that $x\in F^n_x$.
 	Let $X_n:=\bigcup\mathcal{A}_n$ for each $n\in\mathbb{N}$. For every $x\in X$ we put
 	$$n(x):=\min\{n:x\in X_n\} \textup{\ and\ } V_x:=X\setminus \bigcup_{n<n(x)}\left(X_n\setminus\bigcup \{F\in \mathcal{A}_{n(x)}\colon x\notin F\}\right) .$$
 	Then $\{V_x\}_{x\in X}$ is a neighborhood assignment such that $V_x\cap X_{n(x)}\subseteq F^{n(x)}_x$ for each $x\in X$.
 	Assume $(x,y)\in V_y\times V_x$. Then $x\in X_{n(x)}$ and $x\not\in X_n$ for all $n<n(y)$, consequently $n(x)\ge n(y)$. 
 	Similarly, $n(y)\ge n(x)$. Hence, $n(x)=n(y)$.
 	Since $x\in V_y\subseteq F^{n(y)}_y=F^{n(x)}_y$, we have
 	$F^{n(x)}_y=F^{n(x)}_x$.
 	Thus $d(f(x),f(y))\le {\rm diam}f( F^{n(y)}_y )\le\varepsilon$. 
 	
 	We show that {\bf (GLP) $\Rightarrow$ (Borel 1)}. Let $f\colon X\to Y$ have the generalized Lebesgue property. 
 	For every $n\in\mathbb N$ we choose a  family $\mathcal A_n$ of subsets of $X$ such that 
 	$\mathcal A_n=\bigcup_{k\in\mathbb N}\mathcal A_{n,k}$, 
 	where each $\mathcal A_{n,k}$ is a discrete family consisting of closed sets, 
 	$\mathcal A_n$ is a covering of $X$
 	and ${\rm diam}f(A)<1/n$ for each $A\in \mathcal A_n$. 
 	Denote $\mathcal B:=\bigcup_{n,k\in\mathbb N}\mathcal A_{n,k}$. 
 	Notice that for every subfamily $\mathcal B'\subseteq \mathcal B$ the union $\bigcup\mathcal B'$ of all members in $\mathcal B'$ is an $F_\sigma$-set, 
 	since every $\mathcal A_{k,n}$ is a discrete family of closed sets. 
 	It is easy to see that 
 	$$f^{-1}(V)=\bigcup\{B\in\mathcal B: B\subseteq f^{-1}(V)\} \textup{\ for any open set\ }V\subseteq Y.$$
 	Hence, $f^{-1}(V)$ is an $F_\sigma$-set in $X$. Therefore, $f$ is Borel 1. 
    
  	Finally, we show that {\bf (GLP) $\Rightarrow$ (Cliq)} if $X$ is a Baire space (see also \cite[proof of Theorem 3.8]{H}).  
  	Let $f\colon X\to Y$ have the generalized Lebesgue property, $\varepsilon>0$ and $\emptyset\ne U\subseteq X$ be an open set. 
  	Similarly as in the proof of ``{\bf (GLP) $\Rightarrow$ (WS)}'' there exists a closed cover $\mathcal{A}$ of $X$ 
 	such that ${\rm diam}f(A)\le\varepsilon$ for all $A\in\mathcal{A}$, and $\mathcal{A}=\bigcup_{n\in\mathbb{N}}\mathcal{A}_n$, 
 	where $\mathcal{A}_n$ is discrete for each $n\in\mathbb{N}$. 	
 	Note that for all $n\in\mathbb{N}$ and 
    $x\in\bigcup\mathcal{A}_n$ there exists a unique $F^n_x\in\mathcal{A}_n$ such that $x\in F^n_x$.
 	Let $X_n:=\bigcup\mathcal{A}_n$ for each $n\in\mathbb{N}$.
  	Note that $(X_n)_{n\in\mathbb N}$ is a cover of $X$ consisting of closed sets. 
  	Since $U=\bigcup_{n\in\mathbb{N}} (X_n\cap U)$ and $U$ is a Baire space, there is a nonempty open set $V\subseteq U$ and $n_0\in\mathbb N$ 
  	such that $V\subseteq X_{n_0}\cap U$. 
  	Fix $x\in V$. Since $\mathcal{A}_{n_0}$ is discrete, there exists a nonempty open set $V'$ such that $V'\cap X_{n_0}=V'\cap F^x_{n_0}$.
  	Then ${\rm diam} f(V\cap V')\le\varepsilon$.

    Since the restriction of a $\sigma$-discrete family to a closed set is also $\sigma$-discrete, we obtain, as a corollary of the implication {\bf (GLP)$\Rightarrow$(Cliq)} for Baire spaces, the implication {\bf (GLP)$\Rightarrow$(Frag)} for hereditarily Baire spaces.
 \end{proof}

 \begin{definition}
 	We say that a function $f\colon X\to Y$ between a metric space $(X,\varrho)$ and a metric space $(Y,d)$ has the \emph{Lee-Tang-Zhao (LTZ) property}, if 
	\begin{itemize}
		\item[{\rm (LTZ)}] for every $\varepsilon>0$ there is a positive function $\delta_\varepsilon\colon X\to\mathbb R_+$ such that for all $x,y\in X$
		\begin{equation*} 
			\varrho(x,y)<\min\left\{\delta_\varepsilon(x),\delta_\varepsilon(y)\right\} \,\,\,\Longrightarrow\,\,\,  d(f(x),f(y))<\varepsilon.
		\end{equation*}
	\end{itemize}
 \end{definition}
 
 \begin{theorem}\label{thm:metricX}
 	For any metric spaces $X$ and $Y$ the following relations hold 
 	\begin{center}
 	
		\begin{tikzpicture}[x=0.75pt,y=0.75pt,yscale=-1,xscale=1,
	node distance=1cm, 
	auto,
	shorter/.style={shorten <=1mm,shorten >=0.5mm},
	|*/.style={to path=(\tikztostart.south) -- (\tikztostart.south|-\tikztotarget.north)},
	*|/.style={to path=(\tikztostart.south-|\tikztotarget.north) -- (\tikztotarget.north)}
 ]
 \node[punkt2] (ws) {\rm WS};
 \node[punkt2,right=3cm of ws] (cliq) {\rm Cliq};
 \node[punkt2,right=1.5cm of cliq] (pwd) {\rm PWD};
 \node[punkt2,above left=2cm and 3.5cm of ws] (frag) {\rm Frag};
 \node[punkt2,below left=2cm and 3.5cm of ws] (glp) {\rm GLP};
 \node[punkt2,left=2.7cm of ws] (b1) {\rm Borel 1};
 \node[punkt2,above left=1cm and 1.5cm of glp] (lp) {\rm LP};
 \node[punkt2,below left=1cm and 1.5cm of frag] (pcp) {\rm PCP};
 \path (frag) edge[pil,->] (cliq);
 \path (frag) edge[pil,->] (ws);
 \path[transform canvas={shift={(-5pt,5pt)}}] (frag) edge[pil,->] node[anchor=south]{{\rm\small hB}} (pcp);
 \path (glp) edge[pil,->] node[anchor=north]{{\rm\small B}} (cliq);
 \path (glp) edge[pil,->] (ws);
 \path (lp) edge[pil,->] (glp);
 \path (pcp) edge[pil,->] (frag);
 \path (glp) edge[pil,->] (b1);
 \path (b1) edge[pil,->] node[anchor=north]{{\rm\small $Y$ sep}} (lp);  
 \path[transform canvas={shift={(0pt,-3pt)}}] (cliq) edge[pil,->] node[anchor=north]{{\rm\small B}} (pwd);
  \path[transform canvas={shift={(0pt,3pt)}}] (pwd) edge[pil,->] (cliq);
 \node[right=0.5cm of b1] (x) {\rm\small hB};
 \node[above right=1pt and -12pt of glp] (glp2) {};
 \node[below right=1pt and -12pt of frag] (frag2) {};
 \draw[->] plot [smooth, tension=1.1] coordinates {
    (glp2.north) (x.west) (frag2.south) };
  
  \path[transform canvas={shift={(-5pt,-5pt)}}] (ws) edge[pil,->,dashed] node[anchor=north]{{\rm\small hB}} (frag);
  \path[transform canvas={shift={(-5pt,-5pt)}}] (glp) edge[pil,->,dashed] node[anchor=north]{{\rm\small $X$ sep}} (lp);
  \path[transform canvas={shift={(-5pt,5pt)}}] (ws) edge[pil,->,dashed] (glp);
  \path (b1) edge[pil,->,dashed] node[anchor=north]{{\rm\small hB}} (pcp);
  \path (frag) edge[pil,->,dashed] (b1);

  \node[punkt3,right=0.7cm of ws] (ltz) {\rm LTZ};
  \path (ws) edge[pil,<->,dashed] (ltz);  
\end{tikzpicture}
    
 \end{center}
Here (``hB'') ``B'' near an arrow means that $X$ is (hereditarily) Baire, ``$X$ sep'' (``$Y$ sep'') means that $X$ (respectively, $Y$) is separable, dashed line means that $X$ is metric and solid line means that $X$ is arbitrary topological
(solid part of the diagram is covered by Theorem~\ref{thm:topX}). 
 \end{theorem}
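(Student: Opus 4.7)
The solid arrows are already provided by Theorem~\ref{thm:topX}; only the six dashed implications need new proof. Three of them are short and I would handle first. For WS $\Leftrightarrow$ LTZ, a neighborhood assignment on a metric space corresponds to a positive function via $V_x = B(x,\delta(x))$, and conversely one chooses $\delta(x) > 0$ with $B(x,\delta(x)) \subseteq V_x$; under this correspondence the conditions $(x,y) \in V_y \times V_x$ and $\varrho(x,y) < \min\{\delta(x),\delta(y)\}$ agree verbatim. For GLP $\Rightarrow$ LP when $X$ is separable, every $\sigma$-discrete family of nonempty subsets of a separable metric space is countable, so the GLP cover is automatically an LP cover. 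For Borel 1 $\Rightarrow$ PCP when $X$ is hereditarily Baire, I would run the classical Baire category argument: fix a nonempty closed $F \subseteq X$; each preimage $f|_F^{-1}(W)$ of an open $W \subseteq Y$ is $F_\sigma$ in $F$, so its topological boundary in $F$ is meager in $F$; intersecting the complements of these boundaries over a suitable countable collection of $W$'s yields a dense $G_\delta$ of continuity points of $f|_F$.

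The main new content is WS $\Rightarrow$ GLP for an arbitrary metric $X$. Fix $\varepsilon > 0$ and an LTZ witness $\delta\colon X \to \mathbb{R}_+$. I would stratify $X = \bigcup_{n\geq 1} A_n$ with $A_n := \{x \in X : \delta(x) > 1/n\}$. Inside the metric subspace $A_n$, LTZ forces any two points at $\varrho$-distance less than $1/n$ to have their $f$-images within $d$-distance $\varepsilon$, so the open cover of $A_n$ by balls of radius $1/(3n)$ centred at points of $A_n$ has $f$-diameter at most $\varepsilon$ on each member. Stone's paracompactness theorem applied to $A_n$ gives a $\sigma$-discrete (in $A_n$) open refinement $\mathcal{V}_n = \bigcup_k \mathcal{V}_{n,k}$ of this cover, each of whose members still has $f$-diameter $\leq \varepsilon$.

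The delicate step, and the main obstacle, is to turn $\bigcup_n \mathcal{V}_n$ into a $\sigma$-discrete family of closed subsets of $X$ that still covers $X$ and preserves the diameter bound. My plan is: write each $V \in \mathcal{V}_{n,k}$ as $V = V' \cap A_n$ with $V'$ open in $X$; decompose $V' = \bigcup_j F_{V,j}$ as an ascending union of closed-in-$X$ sets, for instance $F_{V,j} = \{x \in X : \mathrm{dist}(x, X \setminus V') \geq 1/j\}$. For fixed $(n,k,j)$ the family $\{F_{V,j} : V \in \mathcal{V}_{n,k}\}$ inherits discreteness in $X$ from $\mathcal{V}_{n,k}$, since any open set meeting the closures of two distinct members would meet the two members themselves. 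The remaining difficulty is that $F_{V,j}$ sits inside $V'$ rather than inside $V = V' \cap A_n$, so the $f$-diameter bound is not automatic; I would remedy this by invoking Hansell-type $\sigma$-discrete closed decomposability of arbitrary subsets of a metric space to decompose $A_n$ itself into a $\sigma$-discrete family of closed-in-$X$ sets exhausting $A_n$, and then intersecting with the refinement above.

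Once WS $\Rightarrow$ GLP is established, the remaining dashed implications follow. The arrow WS $\Rightarrow$ Frag on a hereditarily Baire metric $X$ is the composition WS $\Rightarrow$ GLP $\Rightarrow$ Frag, the latter being the corollary noted at the end of the proof of Theorem~\ref{thm:topX}. For Frag $\Rightarrow$ Borel 1 on metric $X$, my plan is a transfinite Cantor--Bendixson exhaustion: inductively, given the residual closed set $F_\alpha$, apply fragmentability to pick an open $U_\alpha$ with $U_\alpha \cap F_\alpha \neq \emptyset$ and $\mathrm{diam}\, f(U_\alpha \cap F_\alpha) < \varepsilon$, set $F_{\alpha+1} = F_\alpha \setminus U_\alpha$, and take intersections at limits. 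Collecting the resulting locally closed pieces into a $\sigma$-discrete closed cover of small $f$-diameter yields GLP, and Borel 1 then follows via Theorem~\ref{thm:topX}.
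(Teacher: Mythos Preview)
For the dashed arrows the paper argues almost entirely by citation (Koumoullis for Frag $\Rightarrow$ Borel~1 and Borel~1 $\Rightarrow$ PCP; [KM] for LTZ $\Leftrightarrow$ GLP and for Borel~1 $\Rightarrow$ WS), so your self-contained sketches go beyond what the paper itself does. Your handling of WS $\Leftrightarrow$ LTZ and of GLP $\Rightarrow$ LP for separable $X$ is correct and matches the paper; the transfinite-exhaustion outline for Frag $\Rightarrow$ Borel~1 is plausible.

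The genuine gap is in WS $\Rightarrow$ GLP, on which your derivation of WS $\Rightarrow$ Frag also rests. Two steps fail. First, $\mathcal{V}_{n,k}$ is discrete only in $A_n$; the open extensions $V'$ in $X$ need not form a discrete-in-$X$ family (a point of $X\setminus A_n$ may have every neighborhood meeting several $V'$), and your justification only shows that an open set meeting two $F_{V,j}$'s meets the corresponding $V'$'s, not the $V = V'\cap A_n$'s --- so no contradiction with discreteness in $A_n$ is reached. Second, and decisively, the ``Hansell-type $\sigma$-discrete closed decomposition of $A_n$'' you invoke cannot exist in general: the union of any $\sigma$-discrete family of closed subsets of $X$ is always $F_\sigma$, whereas $A_n = \{x : \delta(x) > 1/n\}$ is an \emph{arbitrary} subset of $X$ (the gauge $\delta$ is completely unrestricted) and need not be $F_\sigma$. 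The proof in [KM] uses a different device and does not try to promote covers of the non-closed strata $A_n$ to closed covers of $X$. A secondary issue: your Borel~1 $\Rightarrow$ PCP sketch relies on ``a suitable countable collection of $W$'s'', which tacitly assumes $Y$ is second countable; the non-separable case needs a different argument.
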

 
 \begin{proof} 
 Assume (where it is indicated in the diagram) that $X$ is hereditarily Baire. 
 Condition {\bf (WS) $\Leftrightarrow$ (LTZ)} is an obvious observation;
 it was explicitly stated in introduction to~\cite{Bou}.

 	Implications {\bf (Frag) $\Rightarrow$ (Borel 1)} and {\bf (Borel 1) $\Rightarrow$ (PCP)} were proved by Koumoullis \cite[Theorem 4.12]{Kou}. 
 	 	
 	From the proof of Lee, Tang and Zhao \cite{LTZ} it follows that 
    {\bf (LTZ) $\Rightarrow$ (PCP)} for a hereditarily Baire $X$. 
 	
 	Implication {\bf (Borel 1) $\Rightarrow$ (WS)} for a hereditarily Baire space $X$ was proved in \cite[Theorem 12]{KM}.

 	Implications {\bf (GLP) $\Rightarrow$ (LTZ)} and {\bf (LTZ) $\Rightarrow$ (GLP)} were proved in \cite{KM} (see Lemma 4 and Lemma 5, respectively). 

 	To see {\bf (GLP) $\Rightarrow$ (LP)} for separable metric spaces $X$ observe that, if $X$ is metric and separable, then a $\sigma$-discrete family must be countable.
    (Recall that a straightforward proof of {\bf (LTZ) $\Rightarrow$ (LP)} for separable $X$ was given in \cite[Theorem 5]{FC}).
 \end{proof}
 
 \begin{remark} 	
 	Implication  {\bf (WS) $\Rightarrow$ (Borel 1)}  for a hereditarily Baire semistratifiable space $X$ was also proved by Bouziad \cite[Theorem 4.5]{Bou} (in fact, he proved that {\bf (WS) $\Rightarrow$ (GLP)}).
 	
 	The equivalence  {\bf (Frag) $\Leftrightarrow$ (GLP)} was proved in \cite[Theorem B]{K2019} for a hereditarily Baire perfect paracompact space $X$. 

    Implication {\bf (GLP) $\Rightarrow$ (LP)} works also for a collectionwise normal separable space. Every paracompact space is collectionwise normal. (For the definition of a collectionwise normal space, see \cite[p. 305]{Eng}.)
 \end{remark}

  \begin{corollary}
 	Let $X$ be a hereditarily Baire metric space and $Y$ is a metric space. Then 
 	\begin{center}{\rm 
 			{\bf 	(GLP) $\Leftrightarrow$  (WS) $\Leftrightarrow$ (LTZ) $\Leftrightarrow$ (PCP) $\Leftrightarrow$ (Borel 1) $\Leftrightarrow$ (Frag).}
 		}
 	\end{center}
 	Moreover, if either $X$ or $Y$ is separable, then
 	\begin{center}{\rm 
 			{\bf 	 (LP) $\Leftrightarrow$ (GLP) $\Leftrightarrow$ (WS) $\Leftrightarrow$ (LTZ) $\Leftrightarrow$ (PCP) $\Leftrightarrow$ (Borel 1) $\Leftrightarrow$ (Frag).}
 		}
 	\end{center}
 \end{corollary}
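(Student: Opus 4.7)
The plan is to deduce both equivalence chains by a diagram-chase through Theorems \ref{thm:topX} and \ref{thm:metricX}. Since $X$ is a hereditarily Baire metric space (and, in particular, Baire), every side condition attached to an arrow in either diagram---``B'', ``hB'', ``$X$ metric''---is automatically satisfied, so the task reduces to selecting closed loops of implications that cover all six (respectively, seven) properties. I expect no genuine obstacle: the work is bookkeeping, and the only care required is to use the correct arrows with the hypotheses we already have.

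For the first chain I would first close the cluster $\{\mathrm{GLP},\mathrm{WS},\mathrm{LTZ}\}$ via
\[
\mathrm{GLP}\;\Rightarrow\;\mathrm{WS}\;\Leftrightarrow\;\mathrm{LTZ}\;\Rightarrow\;\mathrm{GLP},
\]
where the first implication is the unconditional arrow of Theorem \ref{thm:topX}, the middle equivalence is immediate in any metric space (it was recorded at the start of the proof of Theorem \ref{thm:metricX}), and $\mathrm{LTZ}\Rightarrow\mathrm{GLP}$ is the dashed arrow of Theorem \ref{thm:metricX} (from \cite{KM}). Next I would annex Frag and PCP using Theorem \ref{thm:topX}: the equivalence $\mathrm{PCP}\Leftrightarrow\mathrm{Frag}$ holds for hB spaces, the curved arrow gives $\mathrm{GLP}\Rightarrow\mathrm{Frag}$ (also for hB), and $\mathrm{Frag}\Rightarrow\mathrm{WS}$ is unconditional. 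Finally I would insert Borel\,1 via the two dashed arrows of Theorem \ref{thm:metricX}, namely $\mathrm{Frag}\Rightarrow\mathrm{Borel\,1}$ (valid because $X$ is metric) and $\mathrm{Borel\,1}\Rightarrow\mathrm{WS}$ (valid because $X$ is hB, citing \cite{KM}). Concatenating these loops yields the six-fold equivalence.

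For the second assertion I would append LP to the cycle. The implication $\mathrm{LP}\Rightarrow\mathrm{GLP}$ holds unconditionally (Theorem \ref{thm:topX}). The converse is available in both cases covered by the hypothesis: if $X$ is separable, then $\mathrm{GLP}\Rightarrow\mathrm{LP}$ is the dashed ``$X$ sep'' arrow of Theorem \ref{thm:metricX}, since a $\sigma$-discrete family in a separable metric space is necessarily countable; if instead $Y$ is separable, then $\mathrm{Borel\,1}\Rightarrow\mathrm{LP}$ is the ``$Y$ sep'' arrow of Theorem \ref{thm:topX}. Either route closes the loop and places LP in the equivalence class.
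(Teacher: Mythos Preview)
Your proposal is correct and is precisely the intended argument: the paper offers no separate proof of this corollary, leaving it as an immediate diagram-chase through Theorems~\ref{thm:topX} and~\ref{thm:metricX}, and you have carried out that chase accurately. One cosmetic remark: for the step inserting Borel~1 you could equally well use the arrow $\mathrm{Borel\,1}\Rightarrow\mathrm{PCP}$ (the dashed ``hB'' arrow in Theorem~\ref{thm:metricX}) instead of $\mathrm{Borel\,1}\Rightarrow\mathrm{WS}$, since PCP is already in your equivalence class---either choice closes the loop.
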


 \subsection{Examples}
 \label{sec:examples}

 In this section we give counterexamples for implications not included in Theorem~\ref{thm:topX} and~\ref{thm:metricX}.
 Case {\bf (Cliq) $\not\Rightarrow$ (PWD)} and {\bf (Frag) $\not\Rightarrow$ (PCP)}
 are covered by Proposition~\ref{prop:baire-frag} and Proposition~\ref{prop:hbaire-frag}, respectively.

	\begin{example}
		{\bf (WS) $\not\Rightarrow$ (Frag)} even if $X$ is metric.
	\end{example}
	
	\begin{proof}
		Let $X=\mathbb Q$ and  $A,B\subseteq X$ be disjoint sets such that $\overline{A}=\overline{B}=X$. Then the function $f\colon X\to\mathbb R$, $f=\chi_A$, is Borel~1 and weakly separated, but ${\rm diam} f(U)=1$ for every open nonempty set $U\subseteq X$. Hence, $f$ is not fragmentable. 		
		
			Since every function on $\mathbb Q$ is Borel 1, we get	{\bf (Borel 1) $\not\Rightarrow$ (Frag)}.
	\end{proof}

		\begin{example}
		{\bf (Cliq) $\not\Rightarrow$ (Borel 1)} even if $X$ is metric and hereditarily Baire. 
	\end{example}
	
	\begin{proof}
		Let $X=Y=\mathbb R$  and $C\subseteq X$ be the Cantor set. Denote by $A$ the set of all endpoints of $C$ and put $f=\chi_A$. It is easy to check that $f\colon X\to Y$ is cliquish, but $f\restriction_C$ is not Borel 1 because $A$ is not a $G_\delta$-set in $C$. Hence, $f\colon X\to Y$ is not Borel 1. 
		
		As a corollary we obtain {\bf (Cliq) $\not\Rightarrow$ (Frag)}.
	 	\end{proof}
	 	
	 	\begin{example}
	 		{\bf (WS) $\not\Rightarrow$ (Borel 1)} even if $X$ is hereditarily Baire and $Y$ is separable.
	 	\end{example}
	 	
	 	\begin{proof}
	 		Let $X=\mathbb S$ be the Sorgenfrey line and $Y=\mathbb R$. Consider $f\colon X\to Y$, $f=\chi_{\mathbb Q}$. Then $f$ is weakly separated, since it is easy to see that every function defined on $\mathbb S$ is weakly separated. But $f$ is not Borel~1 because $\mathbb Q$ is not a $G_\delta$-set in $X$. 
	 		
	 		Consequently,	{\bf (WS) $\not\Rightarrow$ (GLP)}. Moreover, {\bf (WS) $\not\Rightarrow$ (PCP)} if $X$ is not metrizable. 
	 	\end{proof}
	
			\begin{example}[Martin's Axiom and the negation of the Continuum Hypothesis]
			{\bf (Borel 1) $\not\Rightarrow$ (LP)} even if $X$ is metric and separable.
		\end{example}
		
		\begin{proof}
			Under Martin's Axiom and the negation of the Continuum Hypothesis, there exists an uncountable  set $X\subseteq\mathbb R$ such that each subset of $X$ is $F_\sigma$ in $X$; see \cite[Theorem 9.6]{Buk}. Assume that $Y=X$ is endowed with the discrete metric, i.e. $d(x,y)=1$ if $x\ne y$ and $d(x,y)=0$ if $x=y$.  Then the function $f\colon X\to Y$, $f(x)=x$, is Borel 1. Assume that $f$ has the Lebesgue property. For $\varepsilon=\tfrac{1}{2}$ we choose a closed covering $\{F_n:n\in\mathbb N\}$ of $X$ such that ${\rm diam}f(F_n)<\tfrac{1}{2}$. Then ${\rm diam} f(F_n)=0$ which implies that $f(F_n)=\{y_n\}$ for every $n\in\mathbb N$. Since the sequence $(F_n)$ covers $X$, it follows that $Y$ is countable, a contradiction.
		\end{proof}

 Note that, if $X$ is a nonseparable metric space, then the identity function $\operatorname{Id}\colon X\to X$ is Borel~1 and it does not have (LP); see \cite[Proposition 2.1]{BHH}.
 
 \subsection{More examples: Baire spaces and cliquish/fragmentable  functions} 
 \label{sec:examples2}

In this section, we show in the form of a characterization that some implications
in Theorem~\ref{thm:topX} are not reversible. 

 \begin{lemma} \label{le:lubica} Let $X$ be a topological space. If $X$ is not a Baire space, then there are a nonempty open set $U \subseteq X$ and a fragmentable function $f\colon X \to \Bbb R$ such that $f$ has no point of continuity in $U$ and $f(X) \subseteq \{1/n: n \in \Bbb N\}$. Moreover, if $X$ is perfectly normal, then $f$ has the Lebesgue property.
 \end{lemma}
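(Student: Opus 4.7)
Since $X$ fails the Baire property, I can pick a nonempty open $U\subseteq X$ together with an increasing sequence $(F_n)_{n\in\mathbb N}$ of closed nowhere dense subsets of $X$ with $U\subseteq\bigcup_n F_n$ (taking finite unions ensures monotonicity without destroying the nowhere-dense-plus-closed property). Set
\[
    C_n:=F_n\cup(X\setminus U),\qquad n\in\mathbb N;
\]
each $C_n$ is closed, the sequence is still increasing, and $\bigcup_n C_n=X$. The function I propose is $f(x):=1/n(x)$ where $n(x):=\min\{k:x\in C_k\}$, which automatically satisfies $f(X)\subseteq\{1/n:n\in\mathbb N\}$. The decisive identity is $\{x:f(x)\ge 1/n\}=C_n$, so $f$ is upper semicontinuous.

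Fragmentability is then forced by u.s.c.\ via a one-step argument: on any nonempty closed $F$, set $\alpha:=\inf_F f$ and pick $x_0\in F$ with $f(x_0)<\alpha+\varepsilon$; the set $V:=\{f<\alpha+\varepsilon\}$ is open, contains $x_0$, and satisfies $f(V\cap F)\subseteq[\alpha,\alpha+\varepsilon)$, hence $\mathrm{diam}\,f(V\cap F)<\varepsilon$. No structural assumption on $X$ is needed for this step.

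Discontinuity at every $x_0\in U$ comes from the nowhere-denseness of each $F_n$ in the full space $X$. Any open neighborhood $V\subseteq U$ of $x_0$ cannot be contained in $F_N$, so one finds $y\in V\setminus F_N$; since $y\in U\subseteq\bigcup_k F_k$, necessarily $n(y)>N$ and $f(y)<1/N$. Letting $N\to\infty$ shows $f$ takes values arbitrarily close to $0$ on every neighborhood of $x_0$, while $f(x_0)=1/n(x_0)>0$, so $f$ is discontinuous at $x_0$.

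For the ``Moreover'' part, assume $X$ is perfectly normal and fix $\varepsilon>0$, choosing $N$ with $1/N<\varepsilon$. Every open set in $X$ is an $F_\sigma$, so I can write $X\setminus C_{n-1}=\bigcup_m H_{n,m}$ with $H_{n,m}$ closed (using the convention $C_0:=\emptyset$). Then
\[
    \bigl\{\,C_n\cap H_{n,m}:1\le n\le N-1,\;m\in\mathbb N\,\bigr\}\cup\bigl\{\,H_{N,m}:m\in\mathbb N\,\bigr\}
\]
is a countable closed cover of $X$: each $C_n\cap H_{n,m}$ lies inside $C_n\setminus C_{n-1}$ and carries the constant value $f\equiv 1/n$, while each $H_{N,m}\subseteq X\setminus C_{N-1}$ forces $f$-values into $\{1/k:k\ge N\}$, giving diameter at most $1/N<\varepsilon$. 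This is the Lebesgue property. The step I expect to require the most care is the discontinuity claim; the upper-semicontinuity packaging dispatches fragmentability cleanly, but one must ensure the sets $F_n$ are nowhere dense in the ambient $X$ (not merely in $U$), so that they cannot fill any open subset of $U$ and the ``$f(y)<1/N$'' witnesses can always be found.
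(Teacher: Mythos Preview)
Your construction is exactly the paper's: with $C_n=F_n\cup(X\setminus U)$ your $f(x)=1/n(x)$ coincides with the function the authors define piecewise on $F_1\cup(X\setminus U)$ and on $F_n\setminus F_{n-1}$. The discontinuity and Lebesgue-property arguments are likewise essentially the same as in the paper (the paper phrases discontinuity via the level sets $f^{-1}(\{1/n\})\cap U$ being nowhere dense, you via finding points with $f(y)<1/N$; both use perfect normality to write $C_n\setminus C_{n-1}$ as a countable union of closed sets).

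The one genuine difference is your treatment of fragmentability. The paper verifies it by an explicit case analysis on a closed $F$: either $F\subseteq X\setminus U$, or $F\cap U$ meets infinitely many $F_n\setminus F_{n-1}$ (use $U\setminus F_{k-1}$ for large $k$), or $F\cap U\subseteq F_n$ for a minimal $n$ (use $U\setminus F_{n-1}$). You instead observe once and for all that $\{f\ge 1/n\}=C_n$ is closed, so $f$ is upper semicontinuous, and then run the general ``$\inf_F f$'' argument. This is cleaner and more conceptual---it makes transparent that \emph{any} u.s.c.\ function into $\mathbb R$ is fragmentable---at the cost of hiding the concrete open sets that witness small oscillation. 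Both routes are short; yours is the more reusable one. Your closing caveat about needing the $F_n$ nowhere dense in the ambient $X$ (not merely in $U$) is well placed: taking closures in $X$ of the original nowhere-dense pieces, as you do, is exactly what makes the discontinuity argument go through and also guarantees that each $C_n$ is genuinely closed in $X$.
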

 
 \begin{proof} If $X$ is not a Baire space, then there is a nonempty open set $U$ in $X$ such that $U$ is of the first Baire category in $X$. Let $\{F_n: n \in \Bbb N\}$ be a sequence of relatively closed sets in $U$, which are nowhere dense in $X$ and such that $F_n \subseteq F_{n+1}$ for every $n \in \Bbb N$, and  $U = \bigcup_{n \in \Bbb N} F_n$.
 	
    Define a function $f\colon X\to\mathbb{R}$ by the formula:
      $$f(x)=\left\{\begin{array}{ll}
        1 & \textup{if\ }x \in F_1  \cup (X \setminus U),\\
        \frac{1}{n} & \textup{if\ }x\in F_n  \setminus F_{n-1}, n\in\mathbb{N}, n>1.
        \end{array}\right.$$
    It is easy to verify that there is  no point of continuity of the function $f$ in the open set $U$. (If $n > 1$, then $f^{-1}(\{1/n\}) = f^{-1}((1/(n+1),1/(n-1)) = F_n  \setminus F_{n-1}$ is a nowhere dense set in $X$, so it contains no nonempty open subset. Also $f^{-1}(\{1\}) \cap U = f^{-1}((1/2,3/2)) \cap U = F_1$ is a nowhere dense set in $X$.

    We prove that $f$ is fragmentable. 
    Let $\varepsilon > 0$ and  $F$ be a nonempty closed set in $X$. If $F \subseteq X \setminus U$, then 
    ${\rm diam}f(X \cap F) = 0 < \varepsilon$.
    Suppose $F \cap U \ne \emptyset$. If $F \cap U \cap F_n \ne \emptyset$ for infinitely many $n \in \Bbb N$, 
    let $k \in \Bbb N$ be such that $2/k < \varepsilon$. 
    Put  $G =  U \setminus F_{k-1}$. 
    $G$ is a nonempty  open set in $X$ and 
    ${\rm diam}f(G \cap F) < 2/k < \epsilon$.
    Suppose there is $n \in \Bbb N$ such that  $F \cap U \subseteq F_n$. 
    Without loss of generality we can suppose that $n \in \Bbb N$ is minimal such that $F \cap U \subseteq F_n$. 
    If $n = 1$, then ${\rm diam}f(F \cap U) = 0  < \varepsilon$. 
    If $n > 1$,  the set $U \setminus F_{n-1}$ is a nonempty  open set in $X$ and ${\rm diam}f(U \setminus F_{n-1}) = 0 < \varepsilon$.

    To see that $f$ has the Lebesgue property it is enough to observe that for a perfectly normal $X$ the set $F_n\setminus F_{n-1}$ is $F_\sigma$ for each $n$.
 \end{proof}

  \begin{proposition}\label{prop:baire-frag} 
    Let $X$ be a topological space. The following are equivalent:
 	\begin{enumerate}
 		\item  $X$ is a Baire space;\label{prop:baire-cliq:1}
 		\item every cliquish $f\colon X \to \mathbb R$ is pointwise discontinuous.\label{prop:baire-cliq:2}
 	\end{enumerate}
    Moreover, if $X$ is perfectly normal, the above conditions are equivalent to:
 	\begin{enumerate}[start=3]
        \item every $f\colon X \to \mathbb R$ with the Lebesgue property is pointwise discontinuous.\label{prop:baire-cliq:3}
 	\end{enumerate}
  \end{proposition}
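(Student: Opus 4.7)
My plan is to deduce both equivalences from Lemma~\ref{le:lubica} together with implications already recorded in Theorem~\ref{thm:topX}; no fresh technical ingredient is needed, since the real content lives in the lemma.

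I would begin by establishing (1) $\Leftrightarrow$ (2). The forward direction (1) $\Rightarrow$ (2) is nothing but the arrow Cliq $\Rightarrow$ PWD for Baire spaces in Theorem~\ref{thm:topX}, attributed there to \cite{MR686978}. For the reverse, I argue by contraposition: assuming $X$ is not a Baire space, Lemma~\ref{le:lubica} delivers a fragmentable function $f\colon X\to\R$ and a nonempty open $U$ such that $f\!\restriction_U$ has no continuity point. Since Frag $\Rightarrow$ Cliq trivially, this $f$ is cliquish but not pointwise discontinuous, contradicting (2).

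For the ``moreover'' clause, under perfect normality of $X$, I handle the two new directions symmetrically. For (1) $\Rightarrow$ (3), I chain the implications LP $\Rightarrow$ GLP (which is unconditional, as every countable family is $\sigma$-discrete) and GLP $\Rightarrow$ Cliq (valid for Baire $X$, by Theorem~\ref{thm:topX}), and then apply the already-established (1) $\Rightarrow$ (2); the conclusion is that every LP function on a Baire space is PWD, and perfect normality is in fact not needed here. For (3) $\Rightarrow$ (1), I again use contraposition: the final clause of Lemma~\ref{le:lubica} promotes the output function to have the Lebesgue property precisely when $X$ is perfectly normal, so a non-Baire perfectly normal $X$ automatically carries an LP function that is not pointwise discontinuous.

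Overall the argument is a bookkeeping exercise, and I anticipate no genuine obstacle; the only care required is tracking which hypothesis is active at each step — ``Baire'' for the implications pulled from Theorem~\ref{thm:topX}, and ``perfectly normal'' solely to invoke the LP-upgrade in Lemma~\ref{le:lubica} for the direction (3) $\Rightarrow$ (1).
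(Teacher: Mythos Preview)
Your proposal is correct and follows essentially the same approach as the paper: both directions of (1)$\Leftrightarrow$(2) and of the ``moreover'' clause are obtained exactly as you describe, namely by invoking Theorem~\ref{thm:topX} for the forward implications and Lemma~\ref{le:lubica} (with the perfect-normality upgrade for LP) for the contrapositive reverse implications. Your slightly more explicit chaining LP $\Rightarrow$ GLP $\Rightarrow$ Cliq $\Rightarrow$ PWD for (1)$\Rightarrow$(3) is just an unpacking of what the paper summarizes as ``covered by Theorem~\ref{thm:topX}.''
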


  \begin{proof}
      Implications ``(\ref{prop:baire-cliq:1})$\Rightarrow$(\ref{prop:baire-cliq:2})''
      and ``(\ref{prop:baire-cliq:1})$\Rightarrow$(\ref{prop:baire-cliq:3})''
      are covered by Theorem~\ref{thm:topX}. To see the reverse implications 
      suppose that $X$ is not Baire and use Lemma~\ref{le:lubica}
      to find a function which is cliquish (has the Lebesgue property, respectively)
      and is not pointwise discontinuous.
  \end{proof}

Proposition~\ref{prop:baire-frag} has its ``hereditary'' variant:
  
 \begin{proposition}\label{prop:hbaire-frag} 
 	Let $X$ be a topological space. The following are equivalent:
 	\begin{enumerate}
 		\item $X$ is a hereditarily  Baire space;\label{prop:hbaire-frag:1}
 		\item if $f\colon X \to \Bbb R$ is fragmentable,  then $f$ has (PCP).\label{prop:hbaire-frag:2}
 	\end{enumerate}
    Moreover, if $X$ is perfectly normal, the above conditions are equivalent to:
 	\begin{enumerate}[start=3]
 		\item if $f\colon X \to \Bbb R$ has the Lebesgue property,  then $f$ has (PCP).\label{prop:hbaire-frag:3}
 	\end{enumerate}
 \end{proposition}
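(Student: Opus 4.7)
The forward implications (\ref{prop:hbaire-frag:1})$\Rightarrow$(\ref{prop:hbaire-frag:2}) and (\ref{prop:hbaire-frag:1})$\Rightarrow$(\ref{prop:hbaire-frag:3}) are already contained in Theorem~\ref{thm:topX}: the arrow \textbf{(Frag)}$\Rightarrow$\textbf{(PCP)} is labelled hB, while the chain \textbf{(LP)}$\Rightarrow$\textbf{(GLP)}$\Rightarrow$\textbf{(Frag)}$\Rightarrow$\textbf{(PCP)} (the middle step uses the curved arrow valid for hereditarily Baire $X$) yields the second one.

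For the converse the plan is to lift the strategy of Proposition~\ref{prop:baire-frag} one level up: produce a pathological function on a non-Baire \emph{closed subspace} and extend it to all of $X$. Assume $X$ is not hereditarily Baire and fix a nonempty closed subspace $F\subseteq X$ which is not Baire. Applying Lemma~\ref{le:lubica} to $F$ yields a nonempty open $U\subseteq F$ and a fragmentable function $g\colon F\to\R$ with $g(F)\subseteq\{1/n:n\in\N\}$ that has no point of continuity in $U$; if moreover $X$ is perfectly normal then so is $F$, and $g$ has the Lebesgue property. I would then extend $g$ to $f\colon X\to\R$ by setting $f\equiv 1$ on $X\setminus F$. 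Fragmentability of $f$ reduces to a short case analysis on a nonempty closed $C\subseteq X$: if $C$ meets the open set $X\setminus F$ take $O:=X\setminus F$; otherwise $C\subseteq F$, in which case $C$ is closed in $F$ and fragmentability of $g$ applies directly. The Lebesgue property (under perfect normality) follows by combining the LP-cover of $g$ with an $F_\sigma$-decomposition of the open set $X\setminus F$, on each piece of which $f$ is constant.

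The main obstacle is to verify that $f$ fails (PCP); the witness will be the closed set $F':=\overline{U}$, which lies in $F$ (as $F$ is closed in $X$), hence is closed in $X$, and on which $f=g|_{F'}$. I would show $g|_{F'}$ has no point of continuity by splitting $F'=U\cup(F'\setminus U)$. For $x\in U$: since $U$ is open in $F$ and $U\subseteq F'$, every $F'$-neighborhood of $x$ contains an $F$-open set of the form $V\cap U$ with $x\in V\cap U$, so continuity of $g|_{F'}$ at $x$ would deliver continuity of $g$ at $x$ in $F$, contradicting Lemma~\ref{le:lubica}. For $x\in F'\setminus U$: here $g(x)=1$ and the values of $g$ lie in $\{1,1/2,1/3,\dots\}$, so continuity of $g|_{F'}$ at $x$ within $1/4$ would force $g\equiv 1$ on some $F'$-neighborhood $W$ of $x$; then $W\cap U$ is nonempty (as $x\in\overline{U}$) and $F$-open, yet contained in $g^{-1}(\{1\})\cap U=F_1$, contradicting that $F_1$ is nowhere dense in $F$.
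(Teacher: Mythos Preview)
Your argument is correct and follows exactly the paper's approach: extend the Lemma~\ref{le:lubica} function from a non-Baire closed $F$ by the constant $1$, use the same case split for fragmentability, and invoke perfect normality for the Lebesgue property. The only difference is that the paper simply asserts that the restriction to $\overline{U}$ has no point of continuity, whereas you supply the two-case verification (points in $U$ versus boundary points); your extra details are correct and fill what the paper leaves implicit.
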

 \begin{proof} 
      Implications ``(\ref{prop:hbaire-frag:1})$\Rightarrow$(\ref{prop:hbaire-frag:2})''
      and ``(\ref{prop:hbaire-frag:1})$\Rightarrow$(\ref{prop:hbaire-frag:3})''
      are covered by Theorem~\ref{thm:topX}. 
 
 ``(\ref{prop:hbaire-frag:2})$\Rightarrow$(\ref{prop:hbaire-frag:1})''. Suppose that $X$ is not a hereditarily Baire space. There is a closed nonempty set $F$ in $X$ such that $F$ is not a Baire space. By Lemma \ref{le:lubica} there is a nonempty relatively open set $U$ in   $F$ and a fragmentable function $f\colon F \to \Bbb R$ such that $f$ has no point of continuity in $U$ and $f(F) \subseteq \{1/n: n \in \Bbb N\}$. Define a function $g\colon X \to \Bbb R$ as follows: $g(x) = f(x)$, if $x \in F$ and $g(x) = 1$ otherwise. Then 
 $g\restriction_{\overline U}$ has no point of continuity.

    We show that $g$ is fragmentable.  
    Let $\varepsilon > 0$ and $C$ be a closed nonempty subset of $X$. 
    If $C \cap (X \setminus F) \ne \emptyset$, then ${\rm diam}g(C \cap (X \setminus F)) = 0 < \varepsilon$.

    Suppose now that $C \subseteq F$. Since the  function $f\colon F \to \Bbb R$ is fragmentable 
    there is a nonempty open set $U$ in $F$ such that $C \cap U \ne \emptyset$ and ${\rm diam}f(C \cap U) < \varepsilon$. 
    There is an open set $G$ in $X$ such that $U = G \cap F$. 
    Thus we have $G \cap C = G \cap C \cap F$ and ${\rm diam}g(G \cap C) ={\rm diam}f(C \cap U) < \varepsilon$.

    ``(\ref{prop:hbaire-frag:3})$\Rightarrow$(\ref{prop:hbaire-frag:1})''.
    It is enough to observe that the function $g$ defined in the previous paragraphs 
    has the Lebesgue property (since $X$ is perfectly normal, the set $X\setminus F$ is $F_\sigma$).
 \end{proof}
 
 Theorem \ref{thm:metricX} and Proposition \ref{prop:hbaire-frag} imply the following result.
 \begin{corollary}\label{Cor5}
  Let $X$ be a metric space. The following are equivalent:
 	\begin{enumerate}
 		\item $X$ is a hereditarily  Baire space;
 		\item  if $f\colon X \to \Bbb R$ is Borel 1,  then $f$ has (PCP).
 	\end{enumerate}
 \end{corollary}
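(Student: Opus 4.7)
The plan is to read off both directions from the two results cited in the statement, using the fact that metric spaces are perfectly normal so the enriched version of Proposition~\ref{prop:hbaire-frag} (the one including condition (3)) is available.

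For the forward direction, I would observe that the diagram in Theorem~\ref{thm:metricX} contains, under the hypothesis that $X$ is hereditarily Baire and metric, the dashed chain (Borel~1) $\Rightarrow$ (PCP). So if $X$ is hereditarily Baire metric, any Borel~1 function $f\colon X\to\mathbb{R}$ automatically has (PCP). This direction is essentially a citation and requires no new argument.

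For the converse, I would argue by contrapositive. Assume $X$ is not hereditarily Baire. Since every metric space is perfectly normal, Proposition~\ref{prop:hbaire-frag} (in the form ``(\ref{prop:hbaire-frag:1})$\Leftrightarrow$(\ref{prop:hbaire-frag:3})'') supplies a function $f\colon X\to\mathbb{R}$ with the Lebesgue property that fails (PCP). By the implication {\bf (LP) $\Rightarrow$ (Borel~1)} recorded in Theorem~\ref{thm:topX}, this $f$ is Borel~1. Thus $f$ witnesses the failure of (2), completing the contrapositive.

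There is really no hard step here; the only point worth flagging is the choice of which witness to extract from Proposition~\ref{prop:hbaire-frag}. One could instead use the fragmentable witness from (\ref{prop:hbaire-frag:2}) and appeal to the dashed arrow {\bf (Frag) $\Rightarrow$ (Borel~1)} in Theorem~\ref{thm:metricX}, but routing through (LP) is cleaner since (LP)$\Rightarrow$(Borel~1) is available over arbitrary topological $X$ and the perfect normality of metric spaces is exactly what unlocks condition (3) of Proposition~\ref{prop:hbaire-frag}. Thus the entire corollary reduces to assembling these two prior results.
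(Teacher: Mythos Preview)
Your proposal is correct and matches the paper's own approach, which simply records that the corollary follows from Theorem~\ref{thm:metricX} and Proposition~\ref{prop:hbaire-frag} without spelling out the details. Your explicit unpacking---using the hereditarily-Baire arrow {\bf (Borel~1) $\Rightarrow$ (PCP)} from Theorem~\ref{thm:metricX} for the forward direction and, for the converse, invoking perfect normality of metric spaces to access condition~(\ref{prop:hbaire-frag:3}) of Proposition~\ref{prop:hbaire-frag} together with {\bf (LP) $\Rightarrow$ (Borel~1)}---is exactly what that citation encodes.
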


 \begin{remark}
 Observe that, in the additional assertions of Lemma \ref{le:lubica} and Proposition \ref{prop:hbaire-frag}, we can only assume that every open set in $X$ is $F_\sigma$ instead of supposing that $X$ is perfectly normal.
 \end{remark}


 \section{Equi-families of functions}
 \label{sec:families}
 
 For collections of functions investigated in Section~\ref{sec:properties}, we study the respective subfamilies that fulfill the defining condition in common, the so-called equi-families with the adequate property. If we consider the respective orbit map with values in the product space, its property is equivalent to the respective equi-property for a given subfamily.
 
 We then present solutions of two problems from \cite{BKS}. Finally, we generalize the result of Grande \cite{Gr} showing a sufficient condition for $f\colon X\times Y\to Z$ to be a Borel function of class $\alpha$, in the language of $x$-sections and $y$-sections of $f$.
 
	\subsection{Equi-families and the orbit function}
	The notion of an equi-Baire~1 family introduced by \cite{Le} was a starting point in the previous papers \cite{BKS} and \cite{BHH}. Let $(X,\rho)$ and $(Y,d)$ be metric spaces and $\mathcal F\subseteq Y^X$. Then $\mathcal F$ is called \emph{equi-Baire~1} if 
    \begin{itemize}
        \item 
  for every $\varepsilon>0$ there is a positive function $\delta_\varepsilon\colon X\to\mathbb R_+$ such that for all $x,y\in X$ and all $f\in\mathcal F$
		\begin{equation*} 
			\varrho(x,y)<\min\left\{\delta_\varepsilon(x),\delta_\varepsilon(y)\right\} \,\,\,\Longrightarrow\,\,\,  d(f(x),f(y))<\varepsilon.
		\end{equation*}
	\end{itemize}
    In other words, a modification of (LTZ) property holds where $\delta_\varepsilon$ is chosen the same for every $f\in\mathcal F$.

    In the same spirit, we can modify several properties considered in Section~2 for a single function $f$ from a topological space $X$ to a metric space $(Y,d)$, and we obtain the respective equi-properties for a family 
    $\mathcal F\subseteq Y^X$. We mention that equi-Lebesgue 
    and equi-cliquish families were considered in \cite{BHH}.
    
	\begin{definition} \label{defi}
		Let $\mathcal F\subseteq Y^X$ be a family of functions between a topological space $X$ and a metric space $(Y,d)$. We say that     
		\begin{itemize}
			\item $\mathcal F$ is \emph{equi-weakly separated}, if for every $\varepsilon>0$ there exists a neighborhood assignment $\{V_x\}_{x\in X}$ such that for all $x,y\in X$ and all $f\in\mathcal F$
				\begin{equation*} 
					(x,y)\in V_{y}\times V_{x} \,\,\,\Longrightarrow\,\,\, d(f(x),f(y))<\varepsilon;
				\end{equation*}

			\item $\mathcal F$ is \emph{equi-fragmentable}, if
				 for every  $\varepsilon>0$ and every closed set $\emptyset\ne F\subseteq X$ there exists an open set $U$ such that $U\cap F\neq\emptyset$ and ${\rm diam} f(U\cap F)<\varepsilon$ for all $f\in\mathcal F$;
			
		    \item $\mathcal F$ is \emph{equi-cliquish}, if  
		     for every  $\varepsilon>0$ and every  open set $\emptyset\ne U\subseteq X$ there exists an open set $\emptyset\ne O\subseteq U$ such that  ${\rm diam} f(O)<\varepsilon$ for all $f\in\mathcal F$;

			\item $\mathcal F$ is \emph{equi-Lebesgue}, if 
			for every $\varepsilon >0$ there is a cover 
				$(X_n)_{n\in\mathbb N}$ of $X$ consisting of closed sets such that ${\rm diam}f(X_n)\le\varepsilon$ for all $n\in\mathbb N$
                and all $f\in\mathcal F$;

			\item $\mathcal F$ is \emph{equi-GLP}, if
			for every $\varepsilon >0$ there is a $\sigma$-discrete family $\mathcal A_\varepsilon$ consisting of closed subsets of $X$ such that $X=\bigcup \mathcal A_\varepsilon$ and ${\rm diam}f(A)\le\varepsilon$ for all $A\in\mathcal A_\varepsilon$ and all $f\in\mathcal F$;

			\item $\mathcal F$ has the \emph{point of equicontinuity property} (PECP), if for every closed set $\emptyset\ne F\subseteq X$ the family $\{f\restriction_F\colon f\in\mathcal F\}$ has a point of equicontinuity.		
		\end{itemize}
	\end{definition}

    \begin{remark}\label{nowa}
Clearly, if $X$ is a metric space in the above definition, the notions
of an equi-weakly separated family and an equi-Baire~1 family coincide.
    \end{remark}
	
	Let $X$ be a topological space, and $(Y,d)$ be a bounded metric space. For a family $\mathscr F\subseteq Y^X$ of functions, we denote by 
	$$
	f^{\sharp}_{\mathscr F}(x)=(f(x))_{f\in\mathscr F}
	$$
	the {\it orbit function} $f^\sharp_{\mathscr F}\colon X\to Y^{T}$, where $T=|\mathscr F|$ (see e.g.~\cite[Sec.~4]{MR1849204}).
	Assume that $Z=Y^T$ is equipped with the supremum metric 
	$$
	\varrho(z_1,z_2)=\sup\limits_{t\in T}d(z_1(t),z_2(t)).
	$$
It is easy to see that the following observation is valid.
This is an extended version of Proposition~1 from \cite{Kar}.
	\begin{proposition}\label{prop:orbit}
		Let $X$ be a topological space, and $(Y,d)$ be a bounded metric space. Then
		\begin{enumerate}
			\item[(1)] $\mathscr F$ is equi-continuous at $x\in X$ (on $X$) if and only if 
			$f^{\sharp}_{\mathscr F}\colon X\to (Z,\varrho)$ is continuous at  $x$ (on $X$);
				
			\item[(2)] $\mathscr F$ is equi-weakly separated if and only if 
			$f^{\sharp}_{\mathscr F}\colon X\to (Z,\varrho)$ is weakly separated;

            \item[(3)] if $X$ is metric, then  	  $\mathscr F$ is equi-Baire~1 if and only if  
			$f^{\sharp}_{\mathscr F}\colon X\to (Z,\varrho)$ has LTZ-property;

			\item[(4)] $\mathscr F$ is equi-fragmentable if and   only if  
			$f^{\sharp}_{\mathscr F}\colon X\to (Z,\varrho)$ is fragmentable;
			
			\item[(5)] $\mathscr F$ is equi-cliquish if and   only if  
			$f^{\sharp}_{\mathscr F}\colon X\to (Z,\varrho)$ is cliquish;
			
			\item[(6)]	$\mathscr F$ is equi-Lebesgue if and only if  
			$f^{\sharp}_{\mathscr F}\colon X\to (Z,\varrho)$ has the Lebesgue property;
			
			\item[(7)] $\mathscr F$ is equi-GLP if and only if 
			$f^{\sharp}_{\mathscr F}\colon X\to (Z,\varrho)$ has the generalized Lebesgue property;

            \item[(8)] $\mathscr F$ has (PECP) if and   only if  
			$f^{\sharp}_{\mathscr F}\colon X\to (Z,\varrho)$ has (PCP).
		\end{enumerate}	  
	\end{proposition}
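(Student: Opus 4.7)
The plan is to derive all eight equivalences from one elementary observation about how the supremum metric $\varrho$ on $Z=Y^T$ interacts with diameters. For any pair $x_1,x_2\in X$ one has
$$\varrho\bigl(f^\sharp_{\mathscr F}(x_1),f^\sharp_{\mathscr F}(x_2)\bigr)=\sup_{f\in\mathscr F}d(f(x_1),f(x_2)),$$
and consequently, for every $A\subseteq X$,
$${\rm diam}\,f^\sharp_{\mathscr F}(A)=\sup_{f\in\mathscr F}{\rm diam}\,f(A).$$

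With these two identities each item becomes a routine translation of definitions. For (4)--(7), a condition of the form ``${\rm diam}\,f(A)<\varepsilon$ for every $f\in\mathscr F$'' immediately yields ${\rm diam}\,f^\sharp_{\mathscr F}(A)\le\varepsilon$, while the converse is obvious; since $\varepsilon>0$ is universally quantified, rescaling $\varepsilon\mapsto\varepsilon/2$ lets one pass from $\le\varepsilon$ back to $<\varepsilon$. Thus an open set $U$, an open subset $O\subseteq U$, a closed cover $(X_n)$, or a $\sigma$-discrete family $\mathcal A_\varepsilon$ witnessing the equi-property for $\mathscr F$ serves simultaneously as a witness for fragmentability, cliquishness, the Lebesgue property, or the generalized Lebesgue property of $f^\sharp_{\mathscr F}$, and vice versa.

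The remaining items are handled by the same dictionary applied pointwise rather than on sets. For (2) and (3), the neighborhood assignment $\{V_x\}_{x\in X}$ (respectively the positive function $\delta_\varepsilon$) uniformly witnesses $d(f(x),f(y))<\varepsilon$ for every $f\in\mathscr F$ exactly when it witnesses $\varrho(f^\sharp_{\mathscr F}(x),f^\sharp_{\mathscr F}(y))\le\varepsilon$ for the orbit map. Item (1) is the classical characterization of equi-continuity as continuity into the product under the sup metric, and (8) is just (1) applied to each closed restriction $f^\sharp_{\mathscr F}\restriction_F$ with $(Z,\varrho)$ as the target.

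The only delicate point, and the one to verify carefully, is the shift from strict to non-strict inequalities when passing through the supremum over $f\in\mathscr F$; this is harmless because in every case $\varepsilon>0$ is arbitrary, so rescaling absorbs the discrepancy. Apart from this purely formal issue the proposition is essentially an ``if and only if by definition'' statement, and the boundedness of $d$ is used only to ensure that $\varrho$ is actually a metric on $Z$, so there is no substantive obstacle.
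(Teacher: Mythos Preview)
Your proposal is correct and is precisely the natural unfolding that the paper has in mind: the paper does not give a proof at all, stating only that ``it is easy to see that the following observation is valid,'' and your two identities $\varrho(f^\sharp_{\mathscr F}(x_1),f^\sharp_{\mathscr F}(x_2))=\sup_{f\in\mathscr F}d(f(x_1),f(x_2))$ and ${\rm diam}\,f^\sharp_{\mathscr F}(A)=\sup_{f\in\mathscr F}{\rm diam}\,f(A)$ are exactly what makes each equivalence immediate. Your remarks on the strict/non-strict rescaling and on the role of boundedness are accurate and complete the argument.
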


    We will show some applications of this proposition.

    \begin{remark} \label{remi}
    Let $X$ be a topological space and $(Y,d)$ be a metric space. Put $d^*:=\min\{d,1\}$. Let $\mathcal F\subseteq Y^X$.
    Clearly, $\mathcal F$ has any equi-property from 
    Definition~\ref{defi} using the metric $d$ if and only if it has the respective equi-property using the metric $d^*$. Similarly,
    if $X$ is a metric space, $\mathcal F$ is equi-Baire~1  using  the metric $d$ if and only if it has equi-Baire~1 property using the metric $d^*$.
    \end{remark}

    \begin{proposition} \label{propa1}
    Let $X$ and $Y$ be metric spaces and $\mathcal F\subseteq Y^X$. \begin{enumerate}
        \item If $\mathcal F$ has (PECP), then $\mathcal F$ is equi-Baire~1.
        \item Assume that $X$ is hereditary Baire. Then 
        $\mathcal F$ is equi-Baire~1 if and only if $\mathcal F$ has (PECP).
    \end{enumerate}
    \end{proposition}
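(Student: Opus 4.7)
The plan is to apply the orbit-function reduction of Proposition \ref{prop:orbit} to translate both assertions into statements about the \emph{single} map $f^{\sharp}_{\mathscr F}\colon X\to (Z,\varrho)$ with $Z=Y^{T}$ and $\varrho$ the supremum metric, and then to read the desired implications straight off the diagram in Theorem \ref{thm:metricX}.

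The first preparatory step is to replace the metric $d$ on $Y$ by the bounded metric $d^{*}=\min\{d,1\}$. By Remark \ref{remi} this change affects neither the equi-Baire~1 property nor any equi-property from Definition \ref{defi}, in particular (PECP); and it is exactly what is needed so that the product metric $\varrho(z_1,z_2)=\sup_{t\in T}d^{*}(z_1(t),z_2(t))$ is finite and the orbit function $f^{\sharp}_{\mathscr F}$ lands in the bona fide metric space $(Z,\varrho)$. Since $X$ is also metric, the entire implication diagram of Theorem \ref{thm:metricX} is then at our disposal for $f^{\sharp}_{\mathscr F}$.

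For part (1), I would translate (PECP) of $\mathcal F$ into (PCP) for $f^{\sharp}_{\mathscr F}$ via Proposition \ref{prop:orbit}(8). Theorem \ref{thm:metricX} provides the chain (PCP) $\Rightarrow$ (Frag) $\Rightarrow$ (WS) $\Leftrightarrow$ (LTZ), which holds for \emph{any} metric $X$ without a hereditary-Baire hypothesis. Hence $f^{\sharp}_{\mathscr F}$ has the LTZ-property, and Proposition \ref{prop:orbit}(3) translates this back to equi-Baire~1 for $\mathcal F$. For part (2), one direction is covered by (1); for the converse, assuming $X$ is hereditarily Baire, equi-Baire~1 of $\mathcal F$ yields (LTZ) of $f^{\sharp}_{\mathscr F}$ by Proposition \ref{prop:orbit}(3), and the arrow {\bf (LTZ)$\Rightarrow$(PCP)} in Theorem \ref{thm:metricX} (valid precisely for hereditarily Baire metric $X$) combined with Proposition \ref{prop:orbit}(8) gives (PECP).

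There is no substantial obstacle once the orbit machinery is in place: both directions collapse to a mechanical translation through Proposition \ref{prop:orbit} plus a single reading of the diagram in Theorem \ref{thm:metricX}. The only point requiring attention is the bounded-metric reduction, but this is granted verbatim by Remark \ref{remi}, so the proof should be essentially a two-line application in each part.
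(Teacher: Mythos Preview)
Your proposal is correct and follows exactly the same approach as the paper: reduce to a bounded metric via Remark \ref{remi}, translate equi-properties of $\mathcal F$ into properties of the orbit map $f^{\sharp}_{\mathcal F}$ via Proposition \ref{prop:orbit}, and read off the needed implications from Theorem \ref{thm:metricX}. The paper's own proof is the one-line ``It is enough to apply Theorem \ref{thm:metricX}, Proposition \ref{prop:orbit} and Remark \ref{remi},'' which is precisely what you unpacked.
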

    \begin{proof}
        It is enough to apply Theorem \ref{thm:metricX}, 
        Proposition \ref{prop:orbit} and Remark \ref{remi}. Note that (2) generalizes the result of \cite[Proposition 32]{Le}.
    \end{proof}

    The following proposition extends Proposition~6.4 from \cite{BHH}.

    \begin{proposition} \label{propa2}
      Let $X$ be a Baire topological space, $Y$ be a metric space, and $\mathcal F\subseteq Y^X$. 
      \begin{enumerate} 
      \item If $\mathcal F$ is equi-GLP, then $\mathcal F$ is equi-cliquish.
      \item If $X$ is a metric space and $\mathcal F$ is equi-Baire~1, then $\mathcal F$ is equi-cliquish.
      \end{enumerate}
    \end{proposition}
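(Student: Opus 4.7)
The plan is to reduce both parts to single-function statements about the orbit function $f^{\sharp}_{\mathcal F}$ via Proposition~\ref{prop:orbit}, and then invoke the implication (GLP)$\Rightarrow$(Cliq) from Theorem~\ref{thm:topX}. The only preliminary issue is that Proposition~\ref{prop:orbit} is stated for a bounded metric on $Y$, so I would first pass from $d$ to the equivalent bounded metric $d^{*}=\min\{d,1\}$ on $Y$; by Remark~\ref{remi}, this substitution preserves all the equi-properties appearing in Definition~\ref{defi} as well as the equi-Baire~1 property, so we may assume $d$ is bounded without loss of generality.

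For part (1), given that $\mathcal F$ is equi-GLP, Proposition~\ref{prop:orbit}(7) gives that $f^{\sharp}_{\mathcal F}\colon X\to (Z,\varrho)$ has the generalized Lebesgue property. Since $X$ is a Baire space, Theorem~\ref{thm:topX} yields that $f^{\sharp}_{\mathcal F}$ is cliquish. Applying Proposition~\ref{prop:orbit}(5) in the reverse direction shows that $\mathcal F$ is equi-cliquish.

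For part (2), assume $X$ is a metric space and $\mathcal F$ is equi-Baire~1. By Proposition~\ref{prop:orbit}(3), the orbit map $f^{\sharp}_{\mathcal F}$ has the LTZ-property. Since $X$ is metric, Theorem~\ref{thm:metricX} (the dashed implication (LTZ)$\Rightarrow$(GLP)) gives that $f^{\sharp}_{\mathcal F}$ has the generalized Lebesgue property. Then the argument of part (1) applies verbatim: using that $X$ is a Baire space we pass from GLP to Cliq for the orbit map, and then transfer back through Proposition~\ref{prop:orbit}(5) to conclude that $\mathcal F$ is equi-cliquish.

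There is no real obstacle here: the proof is essentially a diagram chase through the dictionary in Proposition~\ref{prop:orbit}. The only point that needs explicit mention is the reduction to a bounded metric via Remark~\ref{remi}, which ensures Proposition~\ref{prop:orbit} is applicable in our setting.
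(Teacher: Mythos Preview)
Your proof is correct and follows exactly the approach indicated in the paper: reduce to a bounded metric via Remark~\ref{remi}, pass to the orbit map via Proposition~\ref{prop:orbit}, and then apply the single-function implications (GLP)$\Rightarrow$(Cliq) from Theorem~\ref{thm:topX} (respectively (LTZ)$\Rightarrow$(GLP)$\Rightarrow$(Cliq) from Theorem~\ref{thm:metricX}). The paper's own proof is nothing more than a citation of these three results, so you have simply spelled out the diagram chase it leaves implicit.
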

    \begin{proof}
    Assertion (1) follows from Theorem \ref{thm:topX}, Proposition \ref{prop:orbit} and Remark \ref{remi}. Assertion (2) follows from Theorem \ref{thm:metricX}, Proposition \ref{prop:orbit} and Remark \ref{remi}.
    \end{proof}

    For a family $\mathcal F$ of functions from a topological space $X$ to a metric space $Y$ we denote by $EC(\mathcal F)$ the set of equi-continuity points of $\mathcal F$. It is known that this set is $G_\delta$ since $EC=\bigcap_{n\in\mathbb N}O(1/n)$ where
    $$O(\varepsilon)=\bigcup\{U\colon U\subseteq X,\; U \text{ nonempty open, diam}f(U)<\varepsilon\text{ for all }f\in\mathcal F\}$$
    is open for each $\varepsilon>0$; see \cite{FHT}.
    
We can generalize Proposition 6.3 from \cite{BHH} as follows.

\begin{proposition} \label{propa3}
  Let $X$ be a Baire topological space, $Y$ be a metric space, and $\mathcal F\subseteq Y^X$. 
  \begin{enumerate}
  \item If $\mathcal F$ is equi-cliquish, then $EC(\mathcal F)$ is a dense $G_\delta$ set. Thus, if $\mathcal F$ is equi-GLP, then $EC(\mathcal F)$ is a dense $G_\delta$ set.
  \item If $X$ is a metric space and $\mathcal F$ is equi-Baire~1, then $EC(\mathcal F)$ is a dense $G_\delta$ set.
  \end{enumerate}
\end{proposition}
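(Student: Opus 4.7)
The plan is to recycle the standard Baire-category argument for points of continuity, lifted to the ``equi'' setting by the definition of $O(\varepsilon)$ recorded just before the statement.

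First, for part (1), I would recall the identity $EC(\mathcal F)=\bigcap_{n\in\mathbb N}O(1/n)$ and the fact (already noted) that each $O(1/n)$ is open. So it suffices to prove that each $O(\varepsilon)$ is dense in $X$; then the Baire property of $X$ forces the intersection to be a dense $G_\delta$. Density of $O(\varepsilon)$ is immediate from equi-cliquishness: given any nonempty open $U\subseteq X$, equi-cliquishness produces a nonempty open $O\subseteq U$ with $\operatorname{diam} f(O)<\varepsilon$ for all $f\in\mathcal F$, so $O\subseteq O(\varepsilon)\cap U$. The ``thus'' clause then follows from Proposition~\ref{propa2}(1), which tells us that equi-GLP families over a Baire space are equi-cliquish.

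For part (2), I would simply combine the metric-space hypothesis with Proposition~\ref{propa2}(2) to get that $\mathcal F$ is equi-cliquish, and then invoke part (1).

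There is no real obstacle here: all the heavy lifting has already been done in Proposition~\ref{propa2} (and behind that in Theorems~\ref{thm:topX} and~\ref{thm:metricX} together with Proposition~\ref{prop:orbit}); what remains is only the Baire-category step that turns equi-cliquishness into density of $EC(\mathcal F)$. The only mild care needed is to note that $X$ being Baire (not hereditarily Baire) is enough, because we only apply the Baire property to $X$ itself after exhibiting each $O(1/n)$ as an open dense subset.
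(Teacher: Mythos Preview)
Your proof is correct. For the first sentence of~(1) you take a slightly more direct route than the paper: you argue densely for each $O(\varepsilon)$ straight from the definition of equi-cliquishness and then apply the Baire property of $X$, whereas the paper passes to the orbit function $f^{\sharp}_{\mathcal F}$ (via Remark~\ref{remi} and Proposition~\ref{prop:orbit}), invokes the single-function implication (Cliq)$\Rightarrow$(PWD) from Theorem~\ref{thm:topX}, and translates back using Proposition~\ref{prop:orbit}(1). Your argument is really just the unwinding of that proof carried out directly at the level of the family, so the difference is presentational rather than substantive; your version has the minor advantage of not needing the bounded-metric reduction of Remark~\ref{remi}. For the ``thus'' clause and for~(2) you invoke Proposition~\ref{propa2}, whose proof is exactly the combination of Theorem~\ref{thm:topX}/\ref{thm:metricX}, Proposition~\ref{prop:orbit} and Remark~\ref{remi} that the paper cites here, so on those parts the two arguments coincide.
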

\begin{proof}
    Assertion (1) follows from Theorem \ref{thm:topX}, Proposition \ref{prop:orbit} and Remark \ref{remi}. Assertion (2) follows from Theorem \ref{thm:metricX}, Proposition \ref{prop:orbit} and Remark \ref{remi}.
    \end{proof}

\subsection{Closures in the topology of pointwise convergence}

The following result is classical.

\begin{proposition} (\cite{Kel}, Theorem 14, Chapter 7)  Let $X$ be a topological space, $(Y,d)$ be a metric space and  $\mathcal F$ be a family of functions from $X$ to $Y$.  If $\mathcal F$ is equicontinuous at $x$, then the closure $\overline{\mathcal F}$ of $\mathcal F$
relative to the topology of pointwise convergence is also equicontinuous at $x$.
\end{proposition}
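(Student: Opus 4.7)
The plan is to unwind the definitions and pass the equicontinuity inequality to pointwise limits using the continuity of the metric $d$. Fix $x\in X$ at which $\mathcal F$ is equicontinuous and fix $\varepsilon>0$. By the equicontinuity hypothesis there is a neighborhood $U$ of $x$ such that $d(f(y),f(x))<\varepsilon/2$ for every $y\in U$ and every $f\in\mathcal F$. The goal is to show that this same $U$ works for the closure $\overline{\mathcal F}$ with tolerance $\varepsilon$.

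Take any $g\in\overline{\mathcal F}$ and any $y\in U$. The crucial observation is that the basic open neighborhoods of $g$ in the topology of pointwise convergence are determined by finitely many evaluation functionals, so for every $n\in\mathbb N$ the set
\[
W_n=\{h\in Y^X\colon d(h(x),g(x))<1/n \text{ and } d(h(y),g(y))<1/n\}
\]
is an open neighborhood of $g$ and therefore meets $\mathcal F$. Pick $f_n\in W_n\cap\mathcal F$. Then by the triangle inequality
\[
d(g(y),g(x))\le d(g(y),f_n(y))+d(f_n(y),f_n(x))+d(f_n(x),g(x))<\tfrac1n+\tfrac{\varepsilon}{2}+\tfrac1n.
\]
Letting $n\to\infty$ gives $d(g(y),g(x))\le\varepsilon/2<\varepsilon$. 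Since $y\in U$ and $g\in\overline{\mathcal F}$ were arbitrary, $\overline{\mathcal F}$ is equicontinuous at $x$.

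There is essentially no obstacle here; the one point worth being careful about is avoiding reliance on first countability of the pointwise topology. This is why I work with the two-point basic neighborhoods $W_n$ rather than extracting a sequence from $\mathcal F$ converging pointwise to $g$ on all of $X$ (which need not exist in general). Equivalently, one could run the same argument with a net in $\mathcal F$ converging pointwise to $g$ and take a limit along the net, using continuity of $d$; both formulations are routine.
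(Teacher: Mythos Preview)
The paper does not supply its own proof of this proposition; it is quoted as a classical result from Kelley with citation only. Your argument is correct and is exactly the standard one; it also mirrors the proof the paper does write out for the analogous equi-weakly separated statement (pick an $f\in\mathcal F$ close to $g$ at the two relevant points and apply the triangle inequality), so there is nothing to compare.
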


We can state similar results for other equi-families.

\begin{proposition} (\cite{Le}, Proposition 35) Let $(X,d_X)$  and $(Y,d_Y)$ be  metric spaces.  If $\mathcal F$ is an equi-Baire 1 family of functions from $X$ to $Y$, then the closure $\overline{\mathcal F}$ of $\mathcal F$
relative to the topology of pointwise convergence is equi-Baire 1 too.
\end{proposition}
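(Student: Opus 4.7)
The plan is to prove this directly from the definition of equi-Baire~1, exploiting the fact that the pointwise limit of a net of real numbers which are all bounded by some constant is bounded by the same constant. The only subtlety is that the definition of equi-Baire~1 uses strict inequality $d(f(x),f(y))<\varepsilon$, which is not preserved under taking limits, so I will use the standard $\varepsilon/2$ trick.

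First, fix $\varepsilon>0$ and set $\varepsilon'=\varepsilon/2$. By the assumption that $\mathcal F$ is equi-Baire~1, there exists a positive function $\delta_{\varepsilon'}\colon X\to\mathbb R_+$ such that for all $x,y\in X$ and all $f\in\mathcal F$,
\[
d_X(x,y)<\min\{\delta_{\varepsilon'}(x),\delta_{\varepsilon'}(y)\}\ \Longrightarrow\ d_Y(f(x),f(y))<\varepsilon'.
\]
I would then define $\delta_\varepsilon:=\delta_{\varepsilon'}$ and claim that this function witnesses the equi-Baire~1 property of $\overline{\mathcal F}$ at level~$\varepsilon$.

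Next, let $g\in\overline{\mathcal F}$, the closure in the topology of pointwise convergence. By definition, there is a net $(f_\alpha)_{\alpha\in A}$ in $\mathcal F$ converging pointwise to $g$; in particular $f_\alpha(x)\to g(x)$ and $f_\alpha(y)\to g(y)$ in $Y$ for our chosen $x,y$. Suppose $x,y\in X$ satisfy $d_X(x,y)<\min\{\delta_\varepsilon(x),\delta_\varepsilon(y)\}$. Then by the choice of $\delta_{\varepsilon'}$, for every $\alpha$ we have $d_Y(f_\alpha(x),f_\alpha(y))<\varepsilon'$. Passing to the limit along the net and using continuity of the metric $d_Y$ yields $d_Y(g(x),g(y))\le\varepsilon'<\varepsilon$, as required.

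There really is no hard step here; the only conceptual point is that one must reserve a margin (using $\varepsilon'<\varepsilon$) to convert a weak inequality obtained by passing to the limit into a strict inequality matching the definition. I note that the same argument would also follow formally from Proposition~\ref{prop:orbit}(3), Remark~\ref{remi} and the observation that the orbit function of $\overline{\mathcal F}$ is the pointwise limit of orbit functions of $\mathcal F$ in $(Z,\varrho)$, but the direct argument above seems cleaner and avoids invoking the product structure.
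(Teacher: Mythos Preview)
Your proof is correct. The paper itself does not prove this proposition---it simply cites \cite{Le}---but it does prove the analogous statement for equi-weakly separated families (which, by Remark~\ref{nowa}, is the same notion when $X$ is metric), and that proof is essentially the same as yours: given $g\in\overline{\mathcal F}$ and $x,y$, one picks $f\in\mathcal F$ with $d_Y(f(x),g(x))<\varepsilon/3$ and $d_Y(f(y),g(y))<\varepsilon/3$ and applies the triangle inequality, rather than passing to a limit along a net with an $\varepsilon/2$ margin as you do. These are interchangeable variants of the same argument.

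One small caveat about your closing remark: the orbit function $f^\sharp_{\overline{\mathcal F}}$ takes values in $Y^{|\overline{\mathcal F}|}$, not $Y^{|\mathcal F|}$, so it is not literally a pointwise limit of the orbit functions $f^\sharp_{\mathcal F}$; making that alternative route precise would require a bit more care than you suggest. This does not affect your main argument, which stands on its own.
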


\begin{proposition} (\cite{BHH}, Proposition 4.1) Let $(X,d_X)$  and $(Y,d_Y)$ be  metric spaces.  If $\mathcal F$ is an equi-Lebesgue family of functions from $X$ to $Y$, then the closure $\overline{\mathcal F}$ of $\mathcal F$
relative to the topology of pointwise convergence is equi-Lebesgue too.
\end{proposition}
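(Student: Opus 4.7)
The plan is to exploit the fact that the equi-Lebesgue condition is phrased entirely in terms of non-strict bounds on the pairwise distances $d_Y(f(x_1),f(x_2))$, and such bounds are preserved under pointwise limits. Consequently, the \emph{same} cover witnessing the equi-Lebesgue property for $\mathcal F$ at a given $\varepsilon$ should automatically witness it for $\overline{\mathcal F}$.

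More concretely, fix $\varepsilon>0$ and apply the equi-Lebesgue property of $\mathcal F$ to obtain a cover $(X_n)_{n\in\mathbb N}$ of $X$ by closed sets such that $d_Y(f(x_1),f(x_2))\le\varepsilon$ for every $n\in\mathbb N$, every $x_1,x_2\in X_n$, and every $f\in\mathcal F$. Now take an arbitrary $g\in\overline{\mathcal F}$ and pick a net $(f_\lambda)_{\lambda\in\Lambda}\subseteq\mathcal F$ converging to $g$ pointwise. For each $n\in\mathbb N$ and any $x_1,x_2\in X_n$, continuity of $d_Y$ yields
\[
d_Y\bigl(g(x_1),g(x_2)\bigr)=\lim_{\lambda}d_Y\bigl(f_\lambda(x_1),f_\lambda(x_2)\bigr)\le\varepsilon,
\]
so ${\rm diam}\,g(X_n)\le\varepsilon$. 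Since $g\in\overline{\mathcal F}$ was arbitrary, the cover $(X_n)_{n\in\mathbb N}$ already witnesses that $\overline{\mathcal F}$ is equi-Lebesgue.

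I do not expect any serious obstacle here: the argument is just the elementary fact that non-strict inequalities pass to pointwise limits of nets. The same scheme would also yield an analogous closure result for equi-GLP families (one re-uses the same $\sigma$-discrete closed cover), and, via Proposition~\ref{prop:orbit}, the statement can alternatively be read as saying that enlarging the index set of the orbit map to $\overline{\mathcal F}$ preserves the Lebesgue property of that single map. The direct diameter argument, however, is the most transparent way to proceed.
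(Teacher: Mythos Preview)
Your argument is correct and is essentially the approach the paper indicates: the paper proves the analogous closure statement for equi-weakly separated families by an $\varepsilon/3$ triangle-inequality trick (pick a single $f\in\mathcal F$ close to $g$ at the two points) and then says the remaining cases, including equi-Lebesgue, follow by ``similar ideas.'' Your version is a slight streamlining---because the equi-Lebesgue definition uses the non-strict bound $\le\varepsilon$, you can pass to the limit along a net and reuse the \emph{same} cover instead of shrinking to $\varepsilon/3$---but this is a cosmetic variation of the same argument.
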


\begin{proposition}
Let $X$ be a topological space, and let $(Y,d)$ be a metric space.  If $\mathcal F$ is an equi-weakly separated family of functions from $X$ to $Y$, then the closure $\overline{\mathcal F}$ of $\mathcal F$
relative to the topology of pointwise convergence is also equi-weakly separated.
\end{proposition}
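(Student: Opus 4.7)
My plan is to show that the very neighborhood assignment witnessing the equi-weakly separated property of $\mathcal F$---applied at half the target tolerance---also witnesses it for the pointwise closure $\overline{\mathcal F}$. Concretely, I would fix $\varepsilon>0$, apply the hypothesis to $\mathcal F$ at threshold $\varepsilon/2$ to extract a neighborhood assignment $\{V_x\}_{x\in X}$ satisfying $d(f(x),f(y))<\varepsilon/2$ whenever $(x,y)\in V_y\times V_x$ and $f\in\mathcal F$, and then verify that this same assignment works for $\overline{\mathcal F}$ at threshold $\varepsilon$.

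The core step is a three-point triangle inequality. Given $g\in\overline{\mathcal F}$ and a pair $(x,y)\in V_y\times V_x$, I would use the defining property of the product (i.e., pointwise convergence) topology on $Y^X$: for any $\delta>0$ the basic neighborhood of $g$ determined by the finite set $\{x,y\}$ and the radius $\delta$ meets $\mathcal F$, so there is some $f\in\mathcal F$ with $d(f(x),g(x))<\delta$ and $d(f(y),g(y))<\delta$. Combining this with the hypothesis $d(f(x),f(y))<\varepsilon/2$ via the triangle inequality gives $d(g(x),g(y))<2\delta+\varepsilon/2$, and letting $\delta\to 0$ yields $d(g(x),g(y))\le\varepsilon/2<\varepsilon$, as required.

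The only real obstacle is the cosmetic one that strict inequalities need not be preserved under limits---this is precisely why the neighborhood assignment must be produced at $\varepsilon/2$ rather than $\varepsilon$; no other difficulty arises, and in particular no separability or completeness hypothesis on $X$ or $Y$ is needed. One could alternatively phrase the argument via the orbit map formalism of Proposition~\ref{prop:orbit}, observing that the weakly-separated property of a single function passes to pointwise limits of that function, but this reduces to essentially the same triangle-inequality computation and is no shorter.
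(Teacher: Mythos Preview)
Your proposal is correct and follows essentially the same approach as the paper: pick the neighborhood assignment for $\mathcal F$ at a smaller tolerance, then use the triangle inequality with an approximating $f\in\mathcal F$ close to $g$ at the two points $x,y$. The only cosmetic difference is that the paper applies the hypothesis at $\varepsilon/3$ and takes the approximation at the same level, avoiding your limiting step in $\delta$; both variants work equally well.
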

\begin{proof} Let $\varepsilon > 0$. Since $\mathcal F$ is an equi-weakly separated family of functions from $X$ to $Y$, there exists a neighbourhood assignment $(V_x)_{x \in X}$ such that for every $x, y \in X$ and $f \in \mathcal F$
$$(x,y) \in V_y \times V_x \textup{\ implies\ } d(f(x),f(y)) \le \varepsilon/3.$$
Then for $\varepsilon$ and   the neighbourhood assignment $(V_x)_{x \in X}$ we have: for every $x, y \in X$ and for every $g \in \overline{\mathcal F}$
$$(x,y) \in V_y \times V_x \textup{\ implies\ } d(g(x),g(y)) \le \varepsilon.$$
Let $(x,y) \in V_y \times V_x$. If $g \in \mathcal F$,  then $d(g(x),g(y)) \le \varepsilon/3$. Let  $g \in \overline{\mathcal F} \setminus \mathcal F$. There is a function $f \in \mathcal F$ such that $d(f(x),g(x)) < \varepsilon/3$ and $d(f(y),g(y))) < \varepsilon/3$. 
Then we have
$$d(g(x),g(y)) \le d(g(x),f(x)) + d(f(x),f(y)) + d(f(y),g(y)) \le \varepsilon.$$
\end{proof}

Using similar ideas, we can also prove the following implications.

\begin{proposition} Let $X$ be a topological space and $(Y,d)$ be a metric space.
\begin{enumerate}
\item
If $\mathcal F$ is an equi-Lebesgue  family of functions from $X$ to $Y$, then the closure $\overline{\mathcal F}$ of $\mathcal F$ relative to the topology of pointwise convergence is equi-Lebesgue  too.
\item
If $\mathcal F$ is an equi-GLP  family of functions from $X$ to $Y$, then the closure $\overline{\mathcal F}$ of $\mathcal F$ relative to the topology of pointwise convergence is also equi-GLP.
\item
If $\mathcal F$ is an equi-cliquish  family of functions from $X$ to $Y$, then the closure $\overline{\mathcal F}$ of $\mathcal F$ relative to the topology of pointwise convergence is equi-cliquish too.
\item
If $\mathcal F$ is an equi-fragmentable  family of functions from $X$ to $Y$, then the closure $\overline{\mathcal F}$ of $\mathcal F$ relative to the topology of pointwise convergence is equi-fragmentable  too.
\item
If $\mathcal F$  has (PECP), then the closure $\overline{\mathcal F}$ of $\mathcal F$ relative to the topology of pointwise convergence  has (PECP)  too.
\end{enumerate}
\end{proposition}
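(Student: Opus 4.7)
The plan is to treat all five parts by a single template, essentially the one just used for equi-weakly separated families. The structural observation is that each property asserts the existence of a witnessing set $E \subseteq X$ (a member of a closed cover in (1) and (2); a nonempty open subset $O \subseteq U$ in (3); the intersection $U \cap F$ in (4); a neighborhood of the selected continuity point in (5)) on which the diameters of $f(E)$ are uniformly controlled for $f \in \mathcal F$. Since basic neighborhoods in the topology of pointwise convergence on $Y^X$ are determined by finitely many evaluation coordinates, for any $g \in \overline{\mathcal F}$, any pair $x,y \in X$, and any $\eta > 0$, there is some $f \in \mathcal F$ with $d(g(x), f(x)) < \eta$ and $d(g(y), f(y)) < \eta$.

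For each of parts (1)--(4), I would apply the assumed equi-property of $\mathcal F$ to the input data (the value $\varepsilon$ and, where applicable, the prescribed $U$ or $F$) to produce a witnessing set $E$ satisfying ${\rm diam}\, f(E) \le \varepsilon$ for every $f \in \mathcal F$. Then for $g \in \overline{\mathcal F}$ and $x,y \in E$ the pointwise approximation above and the triangle inequality give
\[
d(g(x),g(y)) \le d(g(x),f(x)) + d(f(x),f(y)) + d(f(y),g(y)) < 2\eta + {\rm diam}\, f(E).
\]
Since $\eta > 0$ is arbitrary, this yields $d(g(x),g(y)) \le {\rm diam}\, f(E)$, and hence ${\rm diam}\, g(E) \le \sup_{f \in \mathcal F}{\rm diam}\, f(E)$. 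This is enough directly in (1) and (2), where the condition is $\le\varepsilon$; for the strict inequality required in (3) and (4), one instead applies the equi-property of $\mathcal F$ to the value $\varepsilon/2$, obtaining ${\rm diam}\, f(E) < \varepsilon/2$ and hence ${\rm diam}\, g(E) \le \varepsilon/2 < \varepsilon$, so the same witnessing set $E$ (and the same $\sigma$-discrete decomposition in (2)) certifies the desired property for $\overline{\mathcal F}$.

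For part (5), I would fix a nonempty closed $F \subseteq X$ and let $x_0 \in F$ be a point of equicontinuity of $\{f\restriction_F : f \in \mathcal F\}$. Given $\varepsilon > 0$, choose a neighborhood $U$ of $x_0$ in $X$ with $d(f(x_0), f(y)) \le \varepsilon/2$ for every $y \in U\cap F$ and every $f \in \mathcal F$. The same three-term triangle inequality applied at the pair $(x_0,y)$ then yields $d(g(x_0), g(y)) \le \varepsilon/2 < \varepsilon$ for every $g \in \overline{\mathcal F}$ and $y \in U\cap F$, so $x_0$ is also a point of equicontinuity of $\{g\restriction_F : g \in \overline{\mathcal F}\}$, which gives (PECP) for the closure.

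The argument is essentially routine; the only technical subtlety is keeping straight the strict versus non-strict diameter inequalities, which is handled by the preliminary rescaling $\varepsilon \mapsto \varepsilon/2$ in parts (3)--(5). I do not anticipate any deeper obstacle, since every equi-property in the statement is expressed purely in terms of pointwise distances between function values, and such distances pass to pointwise limits with an approximation error that can be made arbitrarily small.
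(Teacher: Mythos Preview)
Your proposal is correct and is exactly what the paper intends: the paper gives no detailed proof here, merely stating that the five assertions follow ``using similar ideas'' to the preceding equi-weakly separated case, i.e.\ the same three-term triangle inequality combined with pointwise approximation. Your treatment of the strict versus non-strict diameter inequalities via the $\varepsilon/2$ rescaling (and the passage to the supremum over $f\in\mathcal F$ after letting $\eta\to 0$) fills in the only detail worth mentioning.
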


 
\subsection{Applications to functions defined on $X\times Y$}

We are going to solve two problems from the earlier paper \cite{BKS}.

\begin{problem}\cite[Prob.~3.7]{BKS}
Suppose that $\mathcal F\subseteq \mathbb{R}^{X\times Y}$ 
is a family of separately equi-continuous functions defined on the product of Polish spaces $X$ and $Y$, 
i.e.
\begin{itemize}
\item $\{f(x,\cdot)\colon f\in\mathcal F\}$ is equi-continuous for each $x\in X$, and 
\item $\{f(\cdot,y)\colon f\in\mathcal F\}$ is equi-continuous for each $y\in Y$.
\end{itemize}
Is $\mathcal F$ equi-Baire 1?
\end{problem}

Let us give a positive solution in a general setting.
Bouziad \cite[Proposition 4.3]{Bou} proved the following fact. If $X,Y$
are topological spaces and $Z$ is a metric space, then every separately continuous function $f\colon X\times Y\to Z$ is weakly separated. Now, we obtain the following corollary which solves the problem.

\begin{corollary} \label{T1}
Let $X,Y$ be topological spaces and $Z$ be a metric space. Let $\mc F\subseteq Z^{X\times Y}$ be a family of separately equi-continuous functions defined on $X\times Y$. Then $\mc F$ is equi-weakly separated.
\end{corollary}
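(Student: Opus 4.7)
The plan is to reduce the statement to a single-function result via the orbit function introduced in Proposition~\ref{prop:orbit}, and then invoke Bouziad's theorem (cited just above the corollary) that separately continuous functions from a product of topological spaces into a metric space are weakly separated. First, by Remark~\ref{remi} I may replace the metric $d$ on $Z$ by the equivalent bounded metric $d^*=\min\{d,1\}$ without affecting either hypothesis (separate equi-continuity) or conclusion (equi-weak separation). This ensures that the supremum metric $\varrho(z_1,z_2)=\sup_{t\in T} d^*(z_1(t),z_2(t))$ on $Z^T$, where $T=|\mathcal F|$, is well defined, so that the orbit function $f^\sharp_{\mathcal F}\colon X\times Y\to (Z^T,\varrho)$ is a legitimate object.

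Next I would verify that $f^\sharp_{\mathcal F}$ is separately continuous. Fix $x_0\in X$; the partial map $y\mapsto f^\sharp_{\mathcal F}(x_0,y)$ is literally the orbit function of the subfamily $\{f(x_0,\cdot)\colon f\in\mathcal F\}\subseteq Z^Y$, which by hypothesis is equi-continuous on $Y$. By Proposition~\ref{prop:orbit}(1) applied to this subfamily, this partial map is continuous. Symmetrically, for each fixed $y_0\in Y$ the equi-continuity of $\{f(\cdot,y_0)\colon f\in\mathcal F\}$ combined with the same proposition gives continuity of $x\mapsto f^\sharp_{\mathcal F}(x,y_0)$. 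Hence $f^\sharp_{\mathcal F}$ is separately continuous on the product $X\times Y$.

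Now I apply Bouziad's result \cite[Proposition 4.3]{Bou} to the separately continuous map $f^\sharp_{\mathcal F}\colon X\times Y\to(Z^T,\varrho)$: it is weakly separated. Finally, Proposition~\ref{prop:orbit}(2) in the reverse direction translates this single-function property back into the desired family property, namely that $\mathcal F$ is equi-weakly separated.

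I do not foresee a substantive obstacle: once the orbit-function tool is in place, the whole argument is a transparent bookkeeping of properties between $\mathcal F$ and $f^\sharp_{\mathcal F}$, with Bouziad's theorem supplying the only non-formal input. The only point worth attention is the small but essential move of passing to a bounded metric so that $\varrho$ is defined, and recognising that the sections of the orbit function are again orbit functions of the corresponding sectional subfamilies — this is what lets separate equi-continuity pass to separate continuity of $f^\sharp_{\mathcal F}$.
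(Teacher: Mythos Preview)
Your proposal is correct and follows essentially the same route as the paper: pass to the orbit function, observe it is separately continuous (by Proposition~\ref{prop:orbit}(1) applied to each sectional subfamily), apply Bouziad's result to get weak separation, and translate back via Proposition~\ref{prop:orbit}(2). You are simply more explicit than the paper about the bounded-metric reduction and the identification of the sections of $f^\sharp_{\mathcal F}$ with orbit functions of the sectional families.
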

\begin{proof}
Consider the orbit function $f^\sharp_{\mathscr F}\colon X\times Y\to Z^{T}$ (where $T:=|\mathscr F|$) as in Proposition \ref{prop:orbit}. Note that it is separately continuous. By the Bouziad result, $f^\sharp_{\mathscr F}$
is weakly separated and we infer that ${\mathscr F}$ is equi-weakly separated by Proposition \ref{prop:orbit}.
\end{proof}


Let us state the next problem \cite[Prob.~3.11]{BKS} in a strictly formal version as follows.

\begin{problem}\label{prob2}
Suppose that $X$ and $Y$ are Polish and for each $n\in\mathbb{N}$ we have $f_n\colon X\times Y\to\mathbb{R}$.
Moreover, suppose that:
\begin{itemize}
\item the family $\{f_n(x,\cdot) \colon n\in\mathbb{N}\}$ is equi-Baire 1 for each $x\in X$, and
\item the family $\{f_n(\cdot,y) \colon n\in\mathbb{N}\}$ is equi-Baire 1 for each $y\in Y$.
\end{itemize}
Does there exist a comeager $G_\delta$ set $A\subseteq X$ such that $\{f_n|_{A\times Y}\colon n\in\mathbb{N}\}$ is jointly equi-Baire 1?
\end{problem}

\begin{remark} \label{rem:Gr}
Using Proposition~\ref{prop:orbit}(6) for the families:
\begin{itemize}
\item $\mathcal{F}:=\{f_n\colon n\in\mathbb{N}\}$,
\item $\mathcal{F}^1_{x}:=\{f_n(x,\cdot) \colon n\in\mathbb{N}\}$ for each $x\in X$, 
\item $\mathcal{F}^2_{y}:=\{f_n(\cdot,y) \colon n\in\mathbb{N}\}$ for each $y\in Y$,
\end{itemize}
we can see that the above problem is equivalent to the question whether for each separately Baire~1
function $f^{\#}\colon X\times Y\to \mathbb{R}^\mathbb{N}$ there exists 
a comeager $G_\delta$ set $A\subseteq X$ such that $f^{\#}|_{A\times Y}$ is jointly Baire~1.
\end{remark}

Let us show a negative solution of Problem \ref{prob2}. Grande noticed that there exists a function $f\colon\R^2\to\R$ that has all $x$-sections and all $y$-sections of class Baire 1 but $f$ is not
Lebesgue measurable (or it does not have the Baire property); see e.g. \cite[Remark 2]{Gr}.
One can use this fact to obtain the respective counterexample for Problem \ref{prob2}. However, we propose a different simple construction.
Let $B\subseteq\R$ be a Bernstein nonmeasurable set. It is known that for any uncountable Borel set $A\subseteq\R$ the intersection $B\cap A$ is not a Borel set. Let 
$g\colon\R^2\to\R$ be the characteristic function of the set $B^*=\{(x,x)\colon x\in B\}$.
Then $g$ is separately Baire~1. Suppose $A\subseteq\R$ is a nonmeager $G_\delta$ set such that
$g\restriction_{A\times\R}$ is Baire~1. Then $A\cap B$ should be Borel, a contradiction.
Similarly, the function $f\colon\R^2\to\R^\N$ given by $f=(g,g,\dots)$ is separately Baire~1
and $f\restriction_{A\times\R}$ is not jointly Baire~1 for every nonmeager $G_\delta$ set $A\subseteq\R$.
This by Remark \ref{rem:Gr} leads to the negative solution of Problem \ref{prob2}.

Grande proved in \cite[Thm 6]{Gr} the following interesting fact on real functions of two variables.
If $f\colon\R^2\to\R$ is a function with all $x$-sections that form a family having (PECP) and all $y$-sections of Borel class 
$\alpha$, then $f$ is of Borel class $\alpha$. We have observed that this result with the same proof remains valid if $f\colon X\times Y\to Z$ where $X,Y,Z$ are metric spaces and $Y$ is separable.
Below, we propose a further extension of this theorem.

\begin{definition}\label{alphaGLP1}
		We say that a function $f\colon X\to Y$ between a topological space $X$ and a metric space $Y$ has  \emph{$\alpha$-generalized Lebesgue property}, for $1\le \alpha<\omega_1$, if 
			\begin{itemize}
				\item[{\rm ($\alpha$-GLP)}] for every $\varepsilon >0$ there is a $\sigma$-discrete family $\mathcal A_\varepsilon$ in $X$ consisting of Borel sets of additive class $\alpha$  such that $X=\bigcup \mathcal A_\varepsilon$ and ${\rm diam}f(A)\le\varepsilon$ for all $A\in\mathcal A_\varepsilon$.
			\end{itemize}
	\end{definition}
	
	\begin{remark}
		It is easy to see that for an isolated ordinal $\alpha=\beta+1$ we may assume that all elements from $\mathcal A_\varepsilon$ belongs to the multiplicative class $\beta$. Hence, Definition \ref{alphaGLP1} for $\alpha=1$ is consistent with the definition of the generalized Lebesgue property. 
		
		If $\alpha$ is limit, then we may assume that all elements of $\mathcal A$ belongs to multiplicative classes less than $\alpha$. Since every set of multiplicative class less than $\alpha$ in a metric space is ambiguous of class $\alpha$, one can substitute the word ``additive'' by ``ambiguous'' in Definition \ref{alphaGLP1}.
	\end{remark}

\begin{definition}
	Let $\mathscr F\subseteq Y^X$ be a family of functions between $X$ and $Y$, and $1\le\alpha<\omega_1$. We say that $\mathscr F$ {\it is equi-$\alpha$-GLP}, if ($\alpha$-GLP) holds for every $f\in\mathcal F$, i.e.
    for every $\varepsilon >0$ there is a $\sigma$-discrete family $\mathcal A_\varepsilon$ in $X$ consisting of Borel sets of additive class $\alpha$  such that $X=\bigcup \mathcal A_\varepsilon$ and ${\rm diam}f(A)\le\varepsilon$ for all $A\in\mathcal A_\varepsilon$ and $f\in\mathcal{F}$.
\end{definition}

	If $f\colon X\times Y\to Z$, then for $x\in X$ and $y\in Y$ by $f^x$ and $f_y$ we denote $x$-section  $f^x\colon Y\to Z$ and $y$-section $f_y\colon X\to Z$  of the function $f$, respectively.
	
	 Let $\mathcal A$ and $\mathcal B$ be families of sets. We write $\mathcal A\prec\mathcal B$ if for every set $A\in\mathcal A$ there exists a set $B\in\mathcal B$ such that $A\subseteq B$.
	
 \begin{theorem}
 Let $X, Y$ and $(Z,d)$ be metric spaces, $1\le\alpha<\omega_1$ and let a function $f\colon X\times Y\to Z$ satisfy the following properties:
 \begin{enumerate}
 	\item the family $\mathscr F_X=\{f^x: x\in X\}$ is equi-$\alpha$-GLP,
 	
 	\item for every $y\in Y$ the section $f_y\colon X\to Z$ belongs to the Borel class $\alpha$. 
 \end{enumerate}
 
 Then $f\colon X\times Y\to Z$ belongs to the  Borel class $\alpha$.
\end{theorem}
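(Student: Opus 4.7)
The plan is to verify that $f^{-1}(V)$ belongs to the additive Borel class $\alpha$ in $X \times Y$ for every open set $V \subseteq Z$, by producing an explicit decomposition that combines the equi-$\alpha$-GLP partition of $Y$ from hypothesis~(1) with the class-$\alpha$ preimages in $X$ coming from hypothesis~(2). The key tools are the open inner approximations $V^{-\varepsilon} := \{z \in V : d(z, Z\setminus V) > \varepsilon\}$, giving $V = \bigcup_k V^{-3/k}$, together with a careful level-by-level partition of $Y$.

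For each $k$ I would invoke equi-$\alpha$-GLP at $\varepsilon = 1/k$ to obtain a $\sigma$-discrete cover $\mathcal{A}_k = \bigcup_m \mathcal{A}_{k,m}$ of $Y$ by sets of additive class $\alpha$ with ${\rm diam}\, f^x(A) \le 1/k$ for every $x \in X$ and every $A \in \mathcal{A}_k$. For $A \in \mathcal{A}_{k,m}$ with $m = m(A)$ the first level at which $A$ appears, fix $y_A \in A$ and set
\[
B_A := A \setminus \bigcup_{m' < m(A)} \bigcup \mathcal{A}_{k,m'}.
\]
Then $\{B_A : A \in \mathcal{A}_k\}$ partitions $Y$, and at each fixed level $m$ the subfamily $\{B_A : A \in \mathcal{A}_{k,m}\}$ inherits discreteness from $\mathcal{A}_{k,m}$ (since $B_A \subseteq A$). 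The central identity I would establish is
\[
f^{-1}(V) = \bigcup_{k \in \N} \bigcup_{m \in \N} \bigcup_{A \in \mathcal{A}_{k,m}} f_{y_A}^{-1}(V^{-2/k}) \times B_A.
\]
For the inclusion $\supseteq$ I would use ${\rm diam}\, f^x(A) \le 1/k$ to transport membership in $V^{-2/k}$ from $(x, y_A)$ to $(x, y)$, landing in $V^{-1/k} \subseteq V$; for $\subseteq$, given $(x,y) \in f^{-1}(V)$ I would choose $k$ with $f(x,y) \in V^{-3/k}$ and take $A$ to be the unique element of $\mathcal{A}_k$ at the first level containing $y$, obtaining $d(f(x, y_A), Z\setminus V) > 2/k$.

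To finish, each factor $f_{y_A}^{-1}(V^{-2/k})$ is of additive class $\alpha$ in $X$ by hypothesis~(2), since $V^{-2/k}$ is open and $f_{y_A}$ is of Borel class $\alpha$. Each $B_A$ is of additive class $\alpha$ in $Y$: using the refinement noted after Definition~\ref{alphaGLP1} that places the elements of $\mathcal{A}_k$ in multiplicative class strictly below $\alpha$, the set $B_A$ is a finite intersection of one such set with complements of sets of the same type, and hence lies in $\Sigma^0_\alpha$. Products of additive class $\alpha$ sets are of additive class $\alpha$ in the metric product $X \times Y$, and a discrete union of additive class $\alpha$ sets in a metric space is again of additive class $\alpha$, so the union over $A \in \mathcal{A}_{k,m}$ stays in additive class $\alpha$, and the countable union over $k, m$ preserves this. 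The main bookkeeping obstacle I expect is handling the isolated and limit cases uniformly when computing the Borel class of $B_A$, and verifying that discrete unions of the rectangles $f_{y_A}^{-1}(V^{-2/k}) \times B_A$ really do land in $\Sigma^0_\alpha$ of the product.
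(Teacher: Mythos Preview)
Your argument is correct, but the paper takes a different route. Instead of decomposing $f^{-1}(V)$ directly, the paper shows that for every $\varepsilon>0$ there is a function $g$ of Borel class~$\alpha$ with $d(f(p),g(p))\le\varepsilon$ for all $p\in X\times Y$, and then invokes the closure of Borel class~$\alpha$ under uniform limits. The approximant $g$ is built exactly from the ingredients you isolate: after disjointifying the equi-$\alpha$-GLP cover into the sets $B_A$ and choosing representatives $y_A$, one sets $g(x,y):=f(x,y_A)$ for the unique $A$ with $y\in B_A$; the diameter bound gives $d(f,g)\le\varepsilon$, and assumption~(2) together with the $\sigma$-discrete structure shows $g$ is of class~$\alpha$. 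Your identity $f^{-1}(V)=\bigcup_{k}\,g_k^{-1}(V^{-2/k})$ (with $g_k$ the approximant at scale $1/k$) is essentially an unfolding of the uniform-limit step, so the two proofs share the same core construction but package the conclusion differently: the paper's version is more modular, while yours avoids citing the uniform-limit closure property at the cost of the $V^{-\varepsilon}$ bookkeeping.

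The two obstacles you flag are precisely the ones the paper addresses. For the Borel class of $B_A$, the paper uses the remark after Definition~\ref{alphaGLP1} to take the elements of $\mathcal A_k$ to be \emph{ambiguous} of class~$\alpha$ from the start (this handles the isolated and limit cases uniformly), so that $\bigcup\mathcal A_{k,m'}$ is a discrete union of ambiguous-$\alpha$ sets and hence ambiguous of class~$\alpha$, making each $B_A$ ambiguous of class~$\alpha$ as well. For the discrete union of rectangles, the paper uses the same fact that a discrete union of additive-class-$\alpha$ sets in a metric space is again of additive class~$\alpha$; this is exactly what you need, and the paper invokes it without further comment. Your phrasing ``complements of sets of the same type'' slightly understates what is being complemented (it is the discrete union $\bigcup\mathcal A_{k,m'}$, not an individual $A'$), but the computation goes through once you adopt the ambiguous-class normalization.
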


\begin{proof} We will show that for every $\varepsilon>0$ there exists a function $g\colon X\times Y\to Z$ of Borel class $\alpha$ such that
	\begin{center}
		 $d(f(p),g(p))\le\varepsilon$ for all $p\in X\times Y$.
	\end{center} 
	Fix $\varepsilon>0$. Since $\mathscr F_X$ is equi-$\alpha$-GLP, there exists a $\sigma$-discrete family $\mathcal A$ in $Y$ consisting of sets of additive and multiplicative class $\alpha$ simultaneously such that
	\begin{center}
		  $Y=\bigcup \mathcal A$ \quad and \quad ${\rm diam}\,f^x(A)\le\varepsilon$
	\end{center} 
	for all $A\in\mathcal A$ and $x\in X$. Let $\mathcal A=\bigcup\limits_n\mathcal A_n$, where each family $\mathcal A_n$ is discrete in $Y$. For every $n\in\mathbb N$ we put 
	\begin{center}
		 	$\mathcal B_n=\{ A\setminus \bigcup\limits_{k<n}\bigcup \mathcal A_k: A\in\mathcal A_n\}$\,\,\,and\,\,\,$B_n=\bigcup\{B:B\in\mathcal B_n\}$.
	\end{center}
    Notice that each family $\mathcal B_n$ is discrete (in particular, disjoint), since $\mathcal B_n\prec\mathcal A_n$.  Let 
      $$
      \mathcal B=\bigcup\limits_{n=1}^\infty \mathcal B_n.
      $$
 	Then $\mathcal B$ is a  $\sigma$-discrete family of sets of ambiguous class $\alpha$ with $\mathcal B\prec \mathcal A$ and $\bigcup\mathcal B=Y$. Moreover, $B'\cap B''=\emptyset$ for all $B'\in\mathcal B_k$, $B''\in\mathcal B_n$ and distinct $k,n\in\mathbb N$. Notice that every set $B_n$ belongs to the additive and multiplicative class $\alpha$ simultaneously as the union of a discrete family of sets of the same classes in metric space $Y$. 
	
	Take an arbitrary $y_{B,n}\in B$ for every nonempty $B\in\mathcal B_n$, $n\in \mathbb N$. For each $y\in Y$ we put $n(y)=\min\{n\in\mathbb N: y\in B_n\}$. Now let $(x,y)\in X\times Y$. Since the family $\mathcal B_{n(y)}$ is disjoint, there exists a unique set $B(y)\in\mathcal B_{n(y)}$ such that $y\in B(y)$. We define
	$$
	g(x,y)=f(x,y_{B(y),n(y)}).
	$$
	Then for every point $(x,y)\in X\times Y$ we have 
	$$
	d(f(x,y),g(x,y))=d(f(x,y), f(x,y_{B(y),n(y)}))\le {\rm diam}\, f^x(B(y)),
	$$
	since $y, y_{B(y),n(y)}\in B(y)$. It follows from the property $\mathcal B\prec\mathcal A$ that ${\rm diam}\, f^x(B(y))\le {\rm diam}\, f^x(A)$ for some $A\in\mathcal A$. Hence,
	$$
		d(f(x,y),g(x,y))\le\varepsilon.
	$$

It remains to prove that $g\colon X\times Y\to Z$ belongs to the Borel class $\alpha$. Fix $n\in\mathbb N$ and let 
$g_n=g\restriction_{X\times B_n}$. It follows from the definition of $g$ and assumption~(2) of the theorem, that $g_{n,B}=g_n\restriction_{X\times B}$ belongs to the Borel class $\alpha$ for every $B\in\mathcal B_n$. Then for every open set $W\subseteq Z$ we have that the preimage
$$
g_n^{-1}(W)=\bigcup\{g_{n,B}^{-1}(W):B\in\mathcal B_n\}
$$
is of the additive class $\alpha$ in $X\times B_n$ as a union of a discrete family of sets of the same class. Now, since $\{X\times B_n:n\in\mathbb N\}$ is a countable covering of $X\times Y$ by sets of ambiguous class $\alpha$ and $g\restriction_{X\times B_n}$ is Borel $\alpha$, we conclude that $g\colon X\times Y\to Z$ belongs to the Borel class $\alpha$.

	Finally, let us remember that the family of functions of Borel class $\alpha$ between metric spaces is closed under uniform limits.	 
\end{proof}

By Proposition~\ref{prop:orbit} and Theorem~\ref{thm:metricX}:
$$\textrm{(PECP)} \Rightarrow \textrm{(equi-fragmentable)} \Rightarrow \textrm{(equi-Baire 1)} \Rightarrow \textrm{(equi-GLP)} \Rightarrow \textrm{(equi-$\alpha$-GLP)},$$
so the following corollary is a generalization of Grande's result. 

  \begin{corollary}
  	Let $X$, $Y$ and $Z$ be metric spaces and $f\colon X\times Y\to Z$ satisfies the following properties
  	\begin{enumerate}
  		\item the family $\mathscr F_X=\{f^x: x\in X\}$ has (PECP),
  		
  		\item for every $y\in Y$ the section $f_y\colon X\to Z$ belongs to the Borel class $\alpha$. 
  	\end{enumerate}
  	
  	Then $f\colon X\times Y\to Z$ belongs to the Borel class $\alpha$.
  \end{corollary}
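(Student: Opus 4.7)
The plan is to deduce the corollary from the preceding theorem by establishing the chain displayed just above its statement, which transforms hypothesis~(1) (PECP for $\mathscr F_X$) into equi-$\alpha$-GLP; then the theorem applies directly in view of hypothesis~(2). My approach is to transfer the equi-properties of $\mathscr F_X=\{f^x:x\in X\}$ to point-properties of a single orbit function $f^{\sharp}_{\mathscr F_X}\colon Y\to (Z^X,\varrho)$ via Proposition~\ref{prop:orbit}, and then to read off the required implications from the diagrams of Theorems~\ref{thm:topX} and~\ref{thm:metricX}.

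As a preliminary, I would replace $d$ on $Z$ by the bounded metric $\min\{d,1\}$ so that the supremum metric $\varrho$ on $Z^X$ is well defined; Remark~\ref{remi} guarantees that none of the equi-properties in play are affected by this change. By Proposition~\ref{prop:orbit}(8), hypothesis~(1) is equivalent to $f^{\sharp}_{\mathscr F_X}$ having (PCP). The arrow (PCP)$\Rightarrow$(Frag) from Theorem~\ref{thm:topX} is unconditional, so $f^{\sharp}_{\mathscr F_X}$ is fragmentable. To promote this to (GLP), I would compose (Frag)$\Rightarrow$(WS) from Theorem~\ref{thm:topX} with the dashed arrow (WS)$\Rightarrow$(GLP) of Theorem~\ref{thm:metricX}; the latter uses only that $Y$ is a metric space, with no Baireness assumption. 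Proposition~\ref{prop:orbit}(7) then translates this back into equi-GLP for $\mathscr F_X$, and since every closed subset of the metric space $Y$ lies in the additive Borel class $\alpha$ for each $\alpha\ge 1$, equi-GLP trivially refines to equi-$\alpha$-GLP. Applying the theorem now yields that $f$ is of Borel class~$\alpha$.

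Essentially no serious obstacle is expected: the corollary is little more than a careful walk through the two diagrams together with the orbit-function dictionary of Proposition~\ref{prop:orbit}. The only point requiring mild care is to avoid routes through the diagrams that secretly demand a hereditarily-Baire hypothesis on $Y$, which is why I prefer the path (PCP)$\Rightarrow$(Frag)$\Rightarrow$(WS)$\Rightarrow$(GLP) over, for instance, one passing through the (Borel~1) node (which would force $Y$ to be hereditarily Baire in order to return to (PCP)-type conditions).
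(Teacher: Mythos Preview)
Your proposal is correct and follows essentially the same route as the paper: the paper simply records the chain $\textrm{(PECP)}\Rightarrow\textrm{(equi-fragmentable)}\Rightarrow\textrm{(equi-Baire~1)}\Rightarrow\textrm{(equi-GLP)}\Rightarrow\textrm{(equi-$\alpha$-GLP)}$, citing Proposition~\ref{prop:orbit} and Theorem~\ref{thm:metricX}, and then invokes the preceding theorem. Your write-up is a more explicit unpacking of exactly this chain via the orbit function, including the sensible care to bound the metric (Remark~\ref{remi}) and to avoid arrows in the diagrams that would impose a hereditarily-Baire assumption on $Y$.
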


  \noindent
  {\bf Ackonwledgement.} We would like to thank Tomek Natkaniec for showing us Theorem 6 in Grande's article \cite{Gr} which is relevant to our consideration of Problem \ref{prob2}.



\bibliographystyle{amsalpha}
\bibliography{Equi_properties}

@article {A1,
    AUTHOR = {Alikhani-Koopaei, Aliasghar},
     TITLE = {Equi-{B}aire one family of functions on metric spaces: a
              generalization of equi-continuity; and some applications},
   JOURNAL = {Topology Appl.},
  FJOURNAL = {Topology and its Applications},
    VOLUME = {277},
      YEAR = {2020},
     PAGES = {107170, 11},
      ISSN = {0166-8641,1879-3207},
   MRCLASS = {26A21 (26A18 37C25 37E05 54C35)},
  MRNUMBER = {4083866},
MRREVIEWER = {T.\ H.\ Steele},
       DOI = {10.1016/j.topol.2020.107170},
       URL = {https://doi.org/10.1016/j.topol.2020.107170},
}

@article {A2,
    AUTHOR = {Alikhani-Koopaei, Aliasghar},
     TITLE = {On dynamics of families of equi-{B}aire one functions on
              metric spaces},
   JOURNAL = {Topology Appl.},
  FJOURNAL = {Topology and its Applications},
    VOLUME = {322},
      YEAR = {2022},
     PAGES = {Paper No. 108267, 11},
      ISSN = {0166-8641,1879-3207},
   MRCLASS = {26A18 (26A21 37C25)},
  MRNUMBER = {4498359},
MRREVIEWER = {T.\ H.\ Steele},
       DOI = {10.1016/j.topol.2022.108267},
       URL = {https://doi.org/10.1016/j.topol.2022.108267},
}

@article {BHH,
    AUTHOR = {Balcerzak, Marek and Hol\'a, \v{L}ubica and Hol\'y, Du\v{s}an},
     TITLE = {Properties of equi-{B}aire 1 and equi-{L}ebesgue families of
              functions},
   JOURNAL = {Topology Appl.},
  FJOURNAL = {Topology and its Applications},
    VOLUME = {367},
      YEAR = {2025},
     PAGES = {Paper No. 109317, 13},
      ISSN = {0166-8641,1879-3207},
   MRCLASS = {54H05 (26A21)},
  MRNUMBER = {4876489},
       DOI = {10.1016/j.topol.2025.109317},
       URL = {https://doi.org/10.1016/j.topol.2025.109317},
}

@article {MR686978,
    AUTHOR = {Dobo\v{s}, Jozef and \v{S}al\'at, Tibor},
     TITLE = {Cliquish functions, {R}iemann integrable functions and
              quasi-uniform convergence},
   JOURNAL = {Acta Math. Univ. Comenian.},
  FJOURNAL = {Acta Mathematica Universitatis Comenianae},
    VOLUME = {40/41},
      YEAR = {1982},
     PAGES = {219--223},
      ISSN = {0862-9544},
   MRCLASS = {54A20 (26A15 54C10)},
  MRNUMBER = {686978},
MRREVIEWER = {J.\ C.\ Oxtoby},
}

@article {BKS,
    AUTHOR = {Balcerzak, Marek and Karlova, Olena and Szuca, Piotr},
     TITLE = {Equi-{B}aire 1 families of functions},
   JOURNAL = {Topology Appl.},
  FJOURNAL = {Topology and its Applications},
    VOLUME = {305},
      YEAR = {2022},
     PAGES = {Paper No. 107900, 13},
      ISSN = {0166-8641,1879-3207},
   MRCLASS = {54H05 (26A21)},
  MRNUMBER = {4335846},
MRREVIEWER = {Szymon\ Plewik},
       DOI = {10.1016/j.topol.2021.107900},
       URL = {https://doi.org/10.1016/j.topol.2021.107900},
}

@article {Bou,
    AUTHOR = {Bouziad, Ahmed},
     TITLE = {The point of continuity property, neighbourhood assignments
              and filter convergences},
   JOURNAL = {Fund. Math.},
  FJOURNAL = {Fundamenta Mathematicae},
    VOLUME = {218},
      YEAR = {2012},
    NUMBER = {3},
     PAGES = {225--242},
      ISSN = {0016-2736,1730-6329},
   MRCLASS = {54C08 (54E35)},
  MRNUMBER = {2982776},
MRREVIEWER = {Yuming\ Xu},
       DOI = {10.4064/fm218-3-2},
       URL = {https://doi.org/10.4064/fm218-3-2},
}

@book {Buk,
    AUTHOR = {Bukovsk\'y, Lev},
     TITLE = {The structure of the real line},
    SERIES = {Instytut Matematyczny Polskiej Akademii Nauk. Monografie
              Matematyczne (New Series) [Mathematics Institute of the Polish
              Academy of Sciences. Mathematical Monographs (New Series)]},
    VOLUME = {71},
 PUBLISHER = {Birkh\"auser/Springer Basel AG, Basel},
      YEAR = {2011},
     PAGES = {xiv+536},
      ISBN = {978-3-0348-0005-1},
   MRCLASS = {03-02 (03E15 03E17 03E35 03E60 26A03 26A21 54H05)},
  MRNUMBER = {2778559},
MRREVIEWER = {Marek\ Balcerzak},
       DOI = {10.1007/978-3-0348-0006-8},
       URL = {https://doi.org/10.1007/978-3-0348-0006-8},
}

@article {FC,
    AUTHOR = {Fenecios, Jonald P. and Cabral, Emmanuel A.},
     TITLE = {A simpler proof for the {$\epsilon$}-{$\delta$}
              characterization of {B}aire class one functions},
   JOURNAL = {Real Anal. Exchange},
  FJOURNAL = {Real Analysis Exchange},
    VOLUME = {39},
      YEAR = {2013/14},
    NUMBER = {2},
     PAGES = {441--445},
      ISSN = {0147-1937,1930-1219},
   MRCLASS = {26A21},
  MRNUMBER = {3365384},
MRREVIEWER = {Olena\ O.\ Karlova},
       URL = {http://projecteuclid.org/euclid.rae/1435670005},
}

@article {Gr,
    AUTHOR = {Grande, Zbigniew},
     TITLE = {Sur les classes de {B}aire des fonctions de deux variables},
   JOURNAL = {Fund. Math.},
  FJOURNAL = {Polska Akademia Nauk. Fundamenta Mathematicae},
    VOLUME = {115},
      YEAR = {1983},
    NUMBER = {2},
     PAGES = {119--125},
      ISSN = {0016-2736,1730-6329},
   MRCLASS = {26B35 (26A21 54C30)},
  MRNUMBER = {699877},
       DOI = {10.4064/fm-115-2-119-125},
       URL = {https://doi.org/10.4064/fm-115-2-119-125},
}

@article {H,
    AUTHOR = {Hol\'a, \v{L}ubica},
     TITLE = {There are {$2^{\mathfrak{c}}$} quasicontinuous non {B}orel
              functions on uncountable {P}olish space},
   JOURNAL = {Results Math.},
  FJOURNAL = {Results in Mathematics},
    VOLUME = {76},
      YEAR = {2021},
    NUMBER = {3},
     PAGES = {Paper No. 126, 11},
      ISSN = {1422-6383,1420-9012},
   MRCLASS = {54C08 (54C30 54C35 54C60 54H05)},
  MRNUMBER = {4266689},
MRREVIEWER = {Waldemar\ Sieg},
       DOI = {10.1007/s00025-021-01440-3},
       URL = {https://doi.org/10.1007/s00025-021-01440-3},
}

@article {K2019,
    AUTHOR = {Karlova, Olena},
     TITLE = {A generalization of a {B}aire theorem concerning barely
              continuous functions},
   JOURNAL = {Topology Appl.},
  FJOURNAL = {Topology and its Applications},
    VOLUME = {258},
      YEAR = {2019},
     PAGES = {433--438},
      ISSN = {0166-8641,1879-3207},
   MRCLASS = {54C30 (26A21 54C50)},
  MRNUMBER = {3927904},
MRREVIEWER = {Tomasz\ Filipczak},
       DOI = {10.1016/j.topol.2019.03.016},
       URL = {https://doi.org/10.1016/j.topol.2019.03.016},
}

@article {KM,
    AUTHOR = {Karlova, Olena and Mykhaylyuk, Volodymyr},
     TITLE = {On composition of {B}aire functions},
   JOURNAL = {Topology Appl.},
  FJOURNAL = {Topology and its Applications},
    VOLUME = {216},
      YEAR = {2017},
     PAGES = {8--24},
      ISSN = {0166-8641,1879-3207},
   MRCLASS = {54C08 (26A21 54H05)},
  MRNUMBER = {3584119},
MRREVIEWER = {Zbigniew\ Grande},
       DOI = {10.1016/j.topol.2016.11.002},
       URL = {https://doi.org/10.1016/j.topol.2016.11.002},
}

@book {Kel,
    AUTHOR = {Kelley, John L.},
     TITLE = {General topology},
 PUBLISHER = {D. Van Nostrand Co., Inc., Toronto-New York-London},
      YEAR = {1955},
     PAGES = {xiv+298},
   MRCLASS = {56.0X},
  MRNUMBER = {70144},
MRREVIEWER = {A.\ H.\ Stone},
}

@article {Kou,
    AUTHOR = {Koumoullis, George},
     TITLE = {A generalization of functions of the first class},
   JOURNAL = {Topology Appl.},
  FJOURNAL = {Topology and its Applications},
    VOLUME = {50},
      YEAR = {1993},
    NUMBER = {3},
     PAGES = {217--239},
      ISSN = {0166-8641,1879-3207},
   MRCLASS = {26A21 (54H05)},
  MRNUMBER = {1227551},
MRREVIEWER = {Pavel\ Kostyrko},
       DOI = {10.1016/0166-8641(93)90022-6},
       URL = {https://doi.org/10.1016/0166-8641(93)90022-6},
}

@book {Ku,
    AUTHOR = {Kuratowski, K.},
     TITLE = {Topology. {V}ol. {I}},
   EDITION = {New},
      NOTE = {Translated from the French by J. Jaworowski},
 PUBLISHER = {Academic Press, New York-London; Pa\'nstwowe Wydawnictwo
              Naukowe [Polish Scientific Publishers], Warsaw},
      YEAR = {1966},
     PAGES = {xx+560},
   MRCLASS = {54.00},
  MRNUMBER = {217751},
MRREVIEWER = {F.\ Burton\ Jones},
}

@article {Le,
    AUTHOR = {Lecomte, Dominique},
     TITLE = {How can we recover {B}aire class one functions?},
   JOURNAL = {Mathematika},
  FJOURNAL = {Mathematika. A Journal of Pure and Applied Mathematics},
    VOLUME = {50},
      YEAR = {2003},
    NUMBER = {1-2},
     PAGES = {171--198},
      ISSN = {0025-5793},
   MRCLASS = {26A21 (54E40 54E52)},
  MRNUMBER = {2136360},
MRREVIEWER = {D.\ J.\ Lutzer},
       DOI = {10.1112/S0025579300014881},
       URL = {https://doi.org/10.1112/S0025579300014881},
}

@article {LTZ,
    AUTHOR = {Lee, Peng-Yee and Tang, Wee-Kee and Zhao, Dongsheng},
     TITLE = {An equivalent definition of functions of the first {B}aire
              class},
   JOURNAL = {Proc. Amer. Math. Soc.},
  FJOURNAL = {Proceedings of the American Mathematical Society},
    VOLUME = {129},
      YEAR = {2001},
    NUMBER = {8},
     PAGES = {2273--2275},
      ISSN = {0002-9939,1088-6826},
   MRCLASS = {26A21},
  MRNUMBER = {1823909},
MRREVIEWER = {Pavel\ Kostyrko},
       DOI = {10.1090/S0002-9939-00-05826-3},
       URL = {https://doi.org/10.1090/S0002-9939-00-05826-3},
}

@book{Kech,
    AUTHOR = {Kechris, Alexander S.},
     TITLE = {Classical descriptive set theory},
    SERIES = {Graduate Texts in Mathematics},
    VOLUME = {156},
 PUBLISHER = {Springer-Verlag, New York},
      YEAR = {1995},
     PAGES = {xviii+402},
      ISBN = {0-387-94374-9},
   MRCLASS = {03E15 (03-01 03-02 04A15 28A05 54H05 90D44)},
  MRNUMBER = {1321597},
MRREVIEWER = {Jakub\ Jasi\'nski},
       DOI = {10.1007/978-1-4612-4190-4},
       URL = {https://doi.org/10.1007/978-1-4612-4190-4},
}

@article{Fo,
    AUTHOR = {Fosgerau, Mogens},
     TITLE = {When are {B}orel functions {B}aire functions?},
   JOURNAL = {Fund. Math.},
  FJOURNAL = {Fundamenta Mathematicae},
    VOLUME = {143},
      YEAR = {1993},
    NUMBER = {2},
     PAGES = {137--152},
      ISSN = {0016-2736,1730-6329},
   MRCLASS = {54C99 (54H05)},
  MRNUMBER = {1240630},
       DOI = {10.4064/fm-143-2-137-152},
       URL = {https://doi.org/10.4064/fm-143-2-137-152},
}

@article{Sp,
    AUTHOR = {Spurn\'y, J.},
     TITLE = {Borel sets and functions in topological spaces},
   JOURNAL = {Acta Math. Hungar.},
  FJOURNAL = {Acta Mathematica Hungarica},
    VOLUME = {129},
      YEAR = {2010},
    NUMBER = {1-2},
     PAGES = {47--69},
      ISSN = {0236-5294,1588-2632},
   MRCLASS = {54H05 (28A05)},
  MRNUMBER = {2725834},
MRREVIEWER = {Pandelis\ Dodos},
       DOI = {10.1007/s10474-010-9223-6},
       URL = {https://doi.org/10.1007/s10474-010-9223-6},
}

@article{Leb,
 author = {Lebesgue, H.},
 title = {On analytically representable functions.},
 fjournal = {Journal de Math{\'e}matiques Pures et Appliqu{\'e}es. 6. S{\'e}rie},
 journal = {Journ. de Math. (6)},
 volume = {1},
 pages = {139--216},
 year = {1905},
 language = {French},
 keywords = {26A03},
 url = {https://eudml.org/doc/234955},
 zbMATH = {2650533},
 JFM = {36.0453.02}
}

@book{Br,
    AUTHOR = {Bruckner, Andrew},
     TITLE = {Differentiation of real functions},
    SERIES = {CRM Monograph Series},
    VOLUME = {5},
   EDITION = {Second},
 PUBLISHER = {American Mathematical Society, Providence, RI},
      YEAR = {1994},
     PAGES = {xii+195},
      ISBN = {0-8218-6990-6},
   MRCLASS = {26-01},
  MRNUMBER = {1274044},
       DOI = {10.1090/crmm/005},
       URL = {https://doi.org/10.1090/crmm/005},
}

@incollection {HOR,
    AUTHOR = {Haydon, R. and Odell, E. and Rosenthal, H.},
     TITLE = {On certain classes of {B}aire-{$1$} functions with
              applications to {B}anach space theory},
 BOOKTITLE = {Functional analysis ({A}ustin, {TX}, 1987/1989)},
    SERIES = {Lecture Notes in Math.},
    VOLUME = {1470},
     PAGES = {1--35},
 PUBLISHER = {Springer, Berlin},
      YEAR = {1991},
      ISBN = {3-540-54206-X},
   MRCLASS = {46B20 (26A21 54C50)},
  MRNUMBER = {1126734},
MRREVIEWER = {H.\ E.\ Lacey},
       DOI = {10.1007/BFb0090209},
       URL = {https://doi.org/10.1007/BFb0090209},
}

@article{GM,
    AUTHOR = {Glasner, E. and Megrelishvili, M.},
     TITLE = {Hereditarily non-sensitive dynamical systems and linear
              representations},
   JOURNAL = {Colloq. Math.},
  FJOURNAL = {Colloquium Mathematicum},
    VOLUME = {104},
      YEAR = {2006},
    NUMBER = {2},
     PAGES = {223--283},
      ISSN = {0010-1354,1730-6302},
   MRCLASS = {37B05 (43A60 54H15 54H20)},
  MRNUMBER = {2197078},
MRREVIEWER = {Jimmie\ D.\ Lawson},
       DOI = {10.4064/cm104-2-5},
       URL = {https://doi.org/10.4064/cm104-2-5},
}

@book {Eng,
    AUTHOR = {Engelking, Ryszard},
     TITLE = {General topology},
    SERIES = {Sigma Series in Pure Mathematics},
    VOLUME = {6},
   EDITION = {Second},
      NOTE = {Translated from the Polish by the author},
 PUBLISHER = {Heldermann Verlag, Berlin},
      YEAR = {1989},
     PAGES = {viii+529},
      ISBN = {3-88538-006-4},
   MRCLASS = {54-01 (54-02)},
  MRNUMBER = {1039321},
MRREVIEWER = {Gary\ Gruenhage},
}

@article {FHT,
    AUTHOR = {Ferrer, Mar\'ia V. and Hern\'andez, Salvador and T\'arrega,
              Luis},
     TITLE = {Equicontinuity criteria for metric-valued sets of continuous
              functions},
   JOURNAL = {Topology Appl.},
  FJOURNAL = {Topology and its Applications},
    VOLUME = {225},
      YEAR = {2017},
     PAGES = {220--236},
      ISSN = {0166-8641,1879-3207},
   MRCLASS = {54C35 (22A05 37B05 46A50 54H11 54H20)},
  MRNUMBER = {3649882},
MRREVIEWER = {Varun\ Jindal},
       DOI = {10.1016/j.topol.2017.04.014},
       URL = {https://doi.org/10.1016/j.topol.2017.04.014},
}

@article {MR1849204,
    AUTHOR = {Akin, Ethan and Glasner, Eli},
     TITLE = {Residual properties and almost equicontinuity},
   JOURNAL = {J. Anal. Math.},
  FJOURNAL = {Journal d'Analyse Math\'{e}matique},
    VOLUME = {84},
      YEAR = {2001},
     PAGES = {243--286},
      ISSN = {0021-7670},
   MRCLASS = {37B05},
  MRNUMBER = {1849204},
MRREVIEWER = {Fran\c{c}ois Blanchard},
       DOI = {10.1007/BF02788112},
       URL = {https://doi.org/10.1007/BF02788112},
}

@article {Kar,
    AUTHOR = {Karlova, O.},
     TITLE = {Extension property for equi-{L}ebesgue families of functions},
   JOURNAL = {Carpathian Math. Publ.},
  FJOURNAL = {Carpathian Mathematical Publications. Karpat\cdot s\cprime
              k\=\i\ Matematichn\=\i\ Publ\=\i kats\=\i\"i},
    VOLUME = {17},
      YEAR = {2025},
    NUMBER = {1},
     PAGES = {5--13},
      ISSN = {2075-9827,2313-0210},
   MRCLASS = {54C20 (26A21 54C30 54C50)},
  MRNUMBER = {4858309},
}

\end{document}